\let\pa\partial
\let\na\nabla
\let\eps\varepsilon
\newcommand{\N}{{\mathbb N}}
\newcommand{\R}{{\mathbb R}}
\newcommand{\diver}{\operatorname{div}}
\newcommand{\T}{{\mathcal T}}
\newcommand{\E}{{\mathcal E}}
\newcommand{\m}{\operatorname{m}}
\newcommand{\Eint}{{\mathcal E}_{{\rm int},K}}
\newcommand{\dist}{{\operatorname{d}}}
\newcommand{\M}{{\mathcal M}}
\newcommand{\F}{{\mathcal F}}
\newcommand{\DD}{{\mathrm D}}
\newtheorem{theorem}{Theorem}
\newtheorem{lemma}[theorem]{Lemma}
\newtheorem{remark}[theorem]{Remark}
\begin{document}

\title[Analysis of a finite-volume scheme]{Analysis of a finite-volume scheme \\
for a single-species biofilm model}

\author[C. Helmer]{Christoph Helmer}
\address{Institute for Analysis and Scientific Computing, Vienna University of
	Technology, Wiedner Hauptstra\ss e 8--10, 1040 Wien, Austria}
\email{christoph.helmer@tuwien.ac.at}

\author[A. J\"ungel]{Ansgar J\"ungel}
\address{Institute for Analysis and Scientific Computing, Vienna University of
	Technology, Wiedner Hauptstra\ss e 8--10, 1040 Wien, Austria}
\email{juengel@tuwien.ac.at}

\author[A. Zurek]{Antoine Zurek}
\address{Laboratoire de Math\'ematiques Appliqu\'ees de Compi\`egne, EA2222, Sorbonne Universit\'e--Universit\'e de Technologie de Compi\`egne (UTC), Compi\`egne, France}
\email{antoine.zurek@utc.fr}

\date{\today}

\thanks{The authors have been partially supported by the Austrian Science Fund (FWF), 
grants P30000, P33010, F65, and W1245, and by the multilateral project of the
Austrian Agency for International Cooperation in Education and Research
(OeAD), grants FR 01/2021 and MULT 11/2020. 
This work received funding from the European 
Research Council (ERC) under the European Union's Horizon 2020 research and 
innovation programme, ERC Advanced Grant NEUROMORPH, no.~101018153.}

\begin{abstract}
An implicit Euler finite-volume scheme for a parabolic reaction-diffusion system 
modeling biofilm growth is analyzed and implemented. The system consists of a 
degenerate-singular diffusion equation for the biomass fraction, which is coupled 
to a diffusion equation for the nutrient concentration, and it is solved in a bounded 
domain with Dirichlet boundary conditions. By transforming the biomass fraction to an
entropy-type variable, it is shown that 
the numerical scheme preserves the lower and upper bounds of the biomass fraction. 
The existence and uniqueness of a discrete solution and the convergence of the scheme 
are proved. Numerical experiments in one and two space dimensions illustrate,
respectively, the rate of convergence in space of our scheme and the 
temporal evolution of the biomass fraction and the nutrient concentration.
\end{abstract}

\keywords{Biofilm growth, finite volumes, two-point flux approximation,
entropy variable, convergence of the scheme.}

\subjclass[2000]{35K51, 35K65, 35K67, 35Q92.}

\maketitle


\section{Introduction}

Biofilms are accumulations of microorganisms that grow on surfaces in liquids
and can be prevalent in natural, industrial, and hospital environments 
\cite{WEMNPRL06}. They can form, for instance, 
on teeth as dental plaque and on inert surfaces of implanted
devices like catheters. Another example are biofilms grown on filters, 
which may extract and digest
organic compounds and help to clean wastewater. A biofilm growth model
that well describes the spatial spreading mechanism for biomass and the 
dependency on the nutrient was suggested in \cite{EPL01}. The model
was analyzed in \cite{EZE09,GKN18} and numerically solved in \cite{AES18,DuEb06}.
Up to our knowledge, there does not exist any analysis for the
numerical approximations in the literature. In this paper, we provide such an
analysis for an implicit Euler finite-volume scheme for the model in \cite{EZE09}.

The biofilm is modeled by the biomass fraction $M(x,t)$ and the nutrient 
concentration $S(x,t)$, satisfying the diffusion equations
\begin{align}\label{1.eqS}
  \pa_t S - d_1\Delta S &= g(S,M), \\ 
	\pa_t M - d_2\diver(f(M)\na M) &= h(S,M)\quad\mbox{in }\Omega,\ t>0, \label{1.eqM}
\end{align}
and the initial and boundary conditions
\begin{equation}\label{1.bic}
  S(0) = S^0,\ M(0) = M^0\quad\mbox{in }\Omega, \quad
	S=1,\ M=M^D\quad\mbox{on }\pa\Omega,\ t>0,
\end{equation}
where $\Omega\subset\R^d$ ($d\ge 1$) is a bounded domain and $0<M^D<1$. 
Other boundary conditions can also be considered; see Remark \ref{rem.hypo}.

The nutrients are consumed with the Monod reaction rate $g(S,M)$, while
biomass is produced by the production rate $h(S,M)$ that is the sum of a Monod 
reaction term and a wastage term,
\begin{equation}\label{1.source}
  g(M,S) = -\kappa_1\frac{SM}{\kappa_4+S}, \quad
	h(M,S) = \kappa_3\frac{SM}{\kappa_4+S} - \kappa_2 M,
\end{equation}
where $\kappa_i\ge 0$ for $i=1,2,3$ and $\kappa_4>0$. The diffusion coefficients
$d_1$ and $d_2$ are assumed to be positive numbers. 
Postulating that there is a sharp biomass front, spatial spreading occurs only
when there is a significant amount of biomass, and the biomass fraction cannot
exceed the maximum bound $M_{\rm max}=1$, the authors of \cite{EPL01} have 
suggested the density-dependent diffusion term
\begin{equation}\label{1.f}
  f(M) = \frac{M^b}{(1-M)^a}, \quad\mbox{where }a>1,\ b>0.
\end{equation}
The diffusion operator in \eqref{1.eqM} can be written as
$\diver(f(M)\na M)=\Delta F(M)$, where
\begin{equation}\label{1.F}
  F(M) = \int_0^M f(s)ds, \quad M\ge 0,
\end{equation}
which gives a porous-medium degeneracy for $M$ close to zero.
This degeneracy leads to a finite speed of propagation and is 
responsible for the formation of a sharp interface between the biofilm and 
the surrounding liquid. The superdiffusion singularity forces the biomass 
fraction to be smaller than the maximal amount $M_{\rm max}=1$. 

The aim of this paper is to analyze an implicit Euler finite-volume scheme
for \eqref{1.eqS}--\eqref{1.f} that preserves the bounds 
$0\le S\le 1$ and $0\le M<1$. We show the existence of a discrete solution,
prove the convergence of the scheme, and present some numerical tests
in one and two space dimensions. The main difficulty of the analysis is the
degenerate-singular diffusion term. On the continuous level, 
if $M^0\le 1-\eps_0$ in $\Omega$ and $M^D\le 1-\eps_0$ for some $\eps_0\in(0,1)$ then
the comparison principle implies that there exists $\delta(\eps_0)>0$ such that
$M\le 1-\delta(\eps_0)$ in $\Omega$ \cite[Prop.~6]{EZE09}. Unfortunately,
we have not found any suitable comparison principle on the discrete level. 

We overcome this issue by using two ideas. First, we formulate equation \eqref{1.eqM}
for the biomass in terms of the approximate ``entropy variable'' \cite{Jue16}
$$
  W^\eps_K := F(M^\eps_K) - F(M^D) + \eps\log\frac{M^\eps_K}{M^D},
$$
where $K\subset\Omega$ denotes a control volume and $\eps>0$ is a regularization
parameter. 
For given $W^\eps_K\in\R$, the biomass fraction is defined implicitly by 
the invertible mapping $(0,1)\to\R$, $M^\eps\mapsto W^\eps$. 
The advantage is that the bounds $0<M^\eps_K<1$ are guaranteed 
by this definition. In fact, the singularity in $F$ provides the upper bound,
while the $\eps$-regularization gives the lower bound.
Second, we prove an $\eps$-uniform bound for $F(M^\eps)$ in $L^1(\Omega)$, which
shows that the a.e.\ limit $M_K=\lim_{\eps\to 0}M^\eps_K$ satisfies $0\le M_K<1$
for all control volumes $K$.

The original biofilm model of \cite{EPL01} contains the transport term
$u\cdot\na M$ in the equation for the biofilm fraction. The flow velocity $u$
is assumed to satisfy the incompressible Navier--Stokes equations in the
region $\{M=0\}$, while $u=0$ in $\{M>0\}$. Thus, model \eqref{1.eqS}--\eqref{1.eqM}
implicitly assumes that $M>0$. We do not require this condition but we prove
in Theorem \ref{thm.ex} below that this property is fulfilled if $M^0$
and $M^D$ are strictly positive. 

The existence and uniqueness of a global weak solution to
\eqref{1.eqS}--\eqref{1.f} was shown in \cite{EZE09}, while the original model
was analyzed in \cite{GKN18} by formulating it as a system of variational
inequalities. The model of \cite{EPL01} was extended in \cite{EEWZ14} by
taking into account nutrient taxis, which forces the biofilm to move up a
nutrient concentration gradient. In that work, a fast-diffusion exponent $a\in(0,1)$
instead of a superdiffusive value $a>1$ (like in \cite{EPL01}) was considered.
Equations \eqref{1.eqS}--\eqref{1.f} were numerically solved using finite differences
\cite{EPL01} or finite volumes \cite{AES18} but without any analysis. 
Some properties of the semi-implicit 
Euler finite-difference scheme were shown in \cite{EbDe07}. A finite-element
approximation for \eqref{1.eqM} with linear diffusion $f(M)=1$ but a constraint
on the upper bound for the biomass was suggested in \cite{AlPe20}.

Local mixing effects between different biofilm species can be described by
multispecies biofilm models \cite{RSE15}. The resulting
cross-diffusion system for the biofilm proportions (without nutrient equation)
was analyzed in \cite{DMZ19} and numerically investigated in \cite{DJZ21}.
A nutrient equation was included in a two-species biofilm system in \cite{GSE18},
where a time-adaptive scheme was suggested to deal with biomasses close to the
maximal value. A finite-volume method was proposed in \cite{RaEb14} for a
biofilm system for the active and inert biomasses, completed by equations
for the nutrient and biocide concentrations, but without performing a 
numerical analysis. 

Let us mention also related biofilm models. The first model was suggested by
Wanner and Gujer \cite{WaGu86} and consists of a one-dimensional
transport equation for the
biofilm species together with a differential equation for the biofilm thickness.
A nonlinear hyperbolic system for the formation of biofilms was derived in
\cite{CRNR13}. Other works were concerned with diffusion equations coupled to
a fluiddynamical model as in \cite{EPL01}. 
For instance, the paper \cite{Sch19} provides a formal 
derivation of the diffusion equations for the biomass and
nutrient, coupled to the Darcy--Stokes equation for the fluid velocity. 
Numerical simulations of a gradient-flow system
for the dead and live biofilm bacteria, coupled to the incompressible Navier--Stokes
equations for the fluid velocity, were presented in \cite{Zha12}, based on a
Crank--Nicolson discretization and an upwinding scheme.

With the exception of \cite{AlPe20,DJZ21}, 
these mentioned works do not contain any analysis of the numerical scheme. 
The paper \cite{AlPe20} is concerned with a finite-element method
and assumes linear diffusion, while \cite{DJZ21} does not contain an equation
for the nutrient. In this paper, we provide a numerical analysis of a finite-volume
scheme to \eqref{1.eqS}--\eqref{1.eqM} for the first time.
Our results can be sketched as follows:
\begin{itemize}
\item We prove the existence of a finite-volume solution $(S_K^k,M_K^k)$,
where $K$ denotes a control volume and $k$ is the time step, satisfying the
bounds $0\le S_M^k\le 1$ and $0\le M_K^k<1$ for all control volumes $K$ and
all time steps $k$.
\item If the initial and boundary biomass are strictly positive, we obtain the
uniqueness of a discrete solution.
\item The discrete solution converges in a certain sense, for mesh sizes
$(\Delta x,\Delta t)\to 0$, to a weak solution to \eqref{1.eqS}--\eqref{1.F}.
\end{itemize} 

The paper is organized as follows.
The numerical scheme and the main results are formulated in Section \ref{sec.main}.
Section \ref{sec.ex} is concerned with the existence proof (Theorem \ref{thm.ex}),
while the uniqueness result (Theorem \ref{thm.unique}) is shown in Section
\ref{sec.unique}. The convergence of the scheme requires uniform estimates
which are proved in Section \ref{sec.est}. The convergence result
(Theorem \ref{thm.conv}) is then shown in Section \ref{sec.conv}. Numerical simulations are presented in Section \ref{sec.num}.


\section{Numerical scheme and main results}\label{sec.main}

\subsection{Notation and assumptions}

Let $\Omega\subset\R^2$ be an open,  bounded, polygonal domain.
We consider only two-dimensional domains, but the generalization
to higher space dimensions is straightforward.
An admissible mesh of $\Omega$ is given by a family $\T$
of open polygonal control volumes (or cells), a family $\E$ of edges, and
a family ${\mathcal P}$ of points $(x_K)_{K\in\T}$ associated to the
control volumes and satisfying Definition 9.1 in \cite{EGH00}. This definition
implies that the straight line $\overline{x_Kx_L}$ between two centers of
neighboring cells is orthogonal to the edge $\sigma=K|L$ between two cells.
The condition is satisfied, for instance, by triangular meshes whose triangles
have angles smaller than $\pi/2$ \cite[Example 9.1]{EGH00} or by Vorono\"{\i} meshes
\cite[Example 9.2]{EGH00}.

The family of edges $\E$ is assumed to consist of interior edges $\E_{\rm int}$
satisfying $\sigma\subset\Omega$ and boundary edges $\sigma\in\E_{\rm ext}$ fulfilling
$\sigma\subset\pa\Omega$. For a given control volume $K\in\T$, 
we denote by $\E_K$ the set of edges of $K$. This set splits into 
$\E_K=\Eint\cup\E_{{\rm ext},K}$. 
For any $\sigma\in\E$, there exists at least one cell $K\in\T$ such that 
$\sigma\in\E_K$. 
When $\sigma$ is an interior
cell, $\sigma=K|L$, $K_\sigma$ can be either $K$ or $L$.

The admissibility of the mesh and the fact that $\Omega$ is two-dimensional 
imply that
\begin{equation}\label{2.estmesh}
  \sum_{K\in\T}\sum_{\sigma\in\E_K}\m(\sigma)\dist(x_K,\sigma)
	\le 2\sum_{K\in\T}\m(K) = 2\m(\Omega),
\end{equation}
where d is the Euclidean distance in $\R^2$ and $\m$ is the one- or two-dimensional 
Lebesgue measure. Let $\sigma\in\E$ be an edge. We define the distance
$$
  \dist_\sigma = \left\{\begin{array}{ll}
	\dist(x_K,x_L) &\quad\mbox{if }\sigma=K|L\in\E_{{\rm int},K}, \\
	\dist(x_K,\sigma) &\quad\mbox{if }\sigma\in\E_{{\rm ext},K},
	\end{array}\right.
$$ 
and introduce the transmissibility coefficient by
\begin{equation}\label{2.trans}
  \tau_\sigma = \frac{\m(\sigma)}{\dist_\sigma}.
\end{equation}
We assume that the mesh satisfies the following regularity assumption: There exists
$\xi>0$ such that for all $K\in\T$ and $\sigma\in\E_K$,
\begin{equation}\label{2.regmesh}
  \dist(x_K,\sigma)\ge \xi \dist_\sigma.
\end{equation}
The size of the mesh is denoted by $\Delta x=\max_{K\in\T}\operatorname{diam}(K)$.

Let $T>0$ be the end time, $N_T\in\N$ the number of time steps, $\Delta t=T/N_T$ 
the time step size, and set $t_k=k\Delta t$ for $k=0,\ldots,N_T$. 
We denote by ${\mathcal D}$ an admissible space-time discretization of 
$\Omega_T:=\Omega\times(0,T)$, 
composed of an admissible mesh ${\mathcal T}$ and the values $(\Delta t,N_T)$. 
The size of ${\mathcal D}$ is defined by $\eta:=\max\{\Delta x,\Delta t\}$.

As it is usual for the finite-volume method, we introduce functions that are
piecewise constant in space and time. The finite-volume scheme yields a vector
$v_\T=(v_K)_{K\in\T}\in\R^{\#\T}$ of approximate values of a piecewise
constant function $v$ such that $v = \sum_{K\in\T}v_K\mathbf{1}_K$,
where $\mathbf{1}_K$ is the characteristic function of $K$. 
We write $v_\M=(v_\T,v_\E)$ for the vector that contains the approximate values
in the control volumes and on the boundary edges, where 
$v_\E:=(v_\sigma)_{\sigma\in\E_{{\rm ext}}}\in\R^{\#\E_{\rm ext}}$.
For such a vector, we use the notation
\begin{equation}\label{2.vKsigma}
  v_{K,\sigma} = \left\{\begin{array}{ll}
	v_L &\quad\mbox{if }\sigma=K|L\in\E_{{\rm int},K}, \\
	v_\sigma &\quad\mbox{if }\sigma\in\E_{{\rm ext},K}
	\end{array}\right.
\end{equation}
for $K\in\T$ and $\sigma\in\E_K$ and introduce the discrete gradient
\begin{equation}\label{2.Dsigma}
  \DD_\sigma  v := |\DD_{K,\sigma} v|, \quad\mbox{where }
	\DD_{K,\sigma} v = v_{K,\sigma}-v_K.
\end{equation}

The discrete $H^1(\Omega)$ seminorm and the discrete $H^1(\Omega)$
norm are defined by
\begin{equation}\label{2.norm}
  |v|_{1,2,\M} = \bigg(\sum_{\sigma\in\E}\tau_\sigma(\DD_\sigma v)^2\bigg)^{1/2},
	\quad \|v\|_{1,2,\M} = \big(\|v\|^2_{0,2,\M} + |v|^2_{1,2,\M}\big)^{1/2},
\end{equation}
where $\|\cdot\|_{0,p,\M}$ denotes the $L^p(\Omega)$ norm
$$
  \|v\|_{0,p,\M} = \bigg(\sum_{K\in\T}\m(K)|v_K|^p\bigg)^{1/p} 
	\quad \mbox{for }1 \le p < \infty.
$$
Then, for a given family of vectors $v^k=(v^k_\T,v^k_\E)$ for $k=1,\ldots,N_T$ and 
a given nonnegative constant $v^D$ such that $v^k_\sigma = v^D$ for all 
$\sigma \in \E_{\rm ext}$, we define the piecewise 
constant in space and time function $v$ by
\begin{equation}\label{reconstruc}
  v(x,t) = \sum_{K\in\T}v_K^k\mathbf{1}_K(x)
	\quad\mbox{for }x\in\Omega,\ t\in(t_{k-1},t_k], \, k=1,\ldots,N_T.
\end{equation}

For the definition of an approximate gradient for such functions, we need to
introduce a dual mesh. Let $K\in\T$ and $\sigma\in\E_K$. The cell $T_{K,\sigma}$
of the dual mesh is defined as follows:
\begin{itemize}
\item If $\sigma=K|L\in\E_{{\rm int},K}$, then $T_{K,\sigma}$ is that cell 
(``diamond'') whose vertices are given by $x_K$, $x_L$, and the end points of the 
edge $\sigma$.
\item If $\sigma\in\E_{{\rm ext},K}$, then $T_{K,\sigma}$ is that cell 
(``half-diamond'') whose vertices are given by $x_K$ and the end points of the 
edge $\sigma$.
\end{itemize}
An example of a construction of a dual mesh can be found in \cite{CLP03}.
The cells $T_{K,\sigma}$ define, up to a negligible set, a partition of $\Omega$. 
The definition of the dual mesh implies the following property. As the straight line 
between two neighboring centers of cells $\overline{x_Kx_L}$ is orthogonal to the edge 
$\sigma=K|L$, it follows that
\begin{equation}\label{2.para}
  \m(\sigma)\dist(x_K,x_L) = 2\m(T_{K,\sigma}) \quad\mbox{for all }
	\sigma=K|L\in \E_{{\rm int},K}.
\end{equation}
The approximate gradient of a piecewise constant function $v$ in $\Omega_T$ is 
given by
$$
  \na^{\mathcal D} v(x,t) = \frac{\m(\sigma)}{\m(T_{K,\sigma})}\DD_{K,\sigma} 
	v^k \nu_{K,\sigma}
	\quad\mbox{for }x\in T_{K,\sigma},\ t\in(t_{k-1},t_k], , k=1,\ldots,N_T,
$$
where the discrete operator $\DD_{K,\sigma}$ is given in \eqref{2.Dsigma} 
and $\nu_{K,\sigma}$ is the unit vector that is normal to $\sigma$ and that
points outward of $K$.


\subsection{Numerical scheme}

We are now in the position to formulate the finite-volume discretization of
\eqref{1.eqS}--\eqref{1.bic}. Let ${\mathcal D}$ be an admissible discretization 
of $\Omega_T$. The initial conditions are discretized by the averages
\begin{equation}\label{sch.ic}
  S_K^0 = \frac{1}{\m(K)}\int_K S^0(x)dx, \quad 
	M_K^0 = \frac{1}{\m(K)}\int_K M^0(x)dx \quad\mbox{for }K\in\T.
\end{equation}
On the Dirichlet boundary, we set $S_\sigma^k=1$ and $M_\sigma^k=M^D$ for 
$\sigma \in \E_{\rm ext}$ at time $t_k$.

Let $S_K^k$ and $M_K^k$ be some approximations of the mean values of $S(\cdot,t_k)$ 
and $M(\cdot,t_k)$, respectively, in the cell $K$. Then the elements 
$S_K^k$ and $M^k_K$ are solutions to
\begin{align}
	& \frac{\m(K)}{\Delta t}(S_K^{k}-S_K^{k-1})
	+ \sum_{\sigma\in\E_K}\F_{S,K,\sigma}^k 
	= \m(K) g(S_K^k,M_K^k), \label{schS} \\
	& \frac{\m(K)}{\Delta t}(M_K^{k}-M_K^{k-1})
	+ \sum_{\sigma\in\E_K}\F_{M,K,\sigma}^k 
	= \m(K)h(S_K^k,M_K^k), \label{schM}
\end{align}
the numerical fluxes are defined as
\begin{equation}
   \F_{S,K,\sigma}^k = -\tau_\sigma d_1 \DD_{K,\sigma} S^k, \quad
   \F_{M,K,\sigma}^k = -\tau_\sigma d_2 \DD_{K,\sigma}F(M^k), \label{Flux}
\end{equation}
where $K\in\T$, $\sigma\in\E_K$, $k\in\{1,\ldots,N_T\}$, and we recall definitions 
\eqref{1.source} for $g$ and $h$, 
\eqref{1.F} for $F$, and \eqref{2.trans} for $\tau_\sigma$. 

For the convenience of the reader, we recall the discrete integration-by-parts
formula for piecewise constant functions $v=(v_\T,v_\E):$
\begin{equation}\label{2.ibp}
  \sum_{K\in\T}\sum_{\sigma\in\E_K}\F_{K,\sigma} v_K 
	= -\sum_{\sigma\in\E}\F_{K,\sigma}\DD_{K,\sigma}v
	+ \sum_{\sigma\in\E_{\rm ext}}\F_{K,\sigma} v_{\sigma},
\end{equation}
where $\F_{K,\sigma}$ is a numerical flux like in \eqref{Flux}.


\subsection{Main results}

We impose the following hypotheses:

\begin{labeling}{(H3)}
\item[(H1)] Domain: $\Omega\subset\R^2$ is a bounded polygonal domain.

\item[(H2)] Discretization: ${\mathcal D}$ is an admissible discretization of 
$\Omega_T:=\Omega\times(0,T)$ satisfying the regularity condition \eqref{2.regmesh}.

\item[(H3)] Initial data: $S^0$, $M^0 \in L^2(\Omega)$ satisfy
$0\le S^0 \le 1$ and $0\le M^0 < 1$ in $\Omega$. 

\item[(H4)] Dirichlet datum: $0 < M^D < 1$.

\item[(H5)] Parameters: $d_1$, $d_2 > 0$, $\kappa_i\ge 0$ for 
$i= 1,2,3$, $\kappa_4 > 0$, $a \ge 1$, and $b \ge 0$.
\end{labeling}

\begin{remark}[Discussion of the hypotheses]\label{rem.hypo}\rm
Conditions $M^0<1$ and $M^D<1$ allow for the proof of $M_K^k<1$ for all $K\in\T$
and $k=1,\ldots,N_T$, thus avoiding quenching of the solution, i.e.\ the
occurrence of regions with $M_K^k=1$. We assume that $M^D$ is positive to be
able to introduce an entropy variable. This condition can be relaxed by
introducing an approximation procedure. The assumption that the boundary biomass
is constant is imposed for simplicity. It can be generalized to piecewise
constant or time-dependent boundary data, for instance. 
Moreover, mixed Dirichlet--Neumann boundary conditions
for the biomass could be imposed as well; see \cite[Section 4]{EZE09}. 
On the other hand, pure Neumann boundary conditions for $M$ may lead, 
in the continuous case, to a quenching phenomenon in finite time, as shown in
\cite{EZE09}. We may assume that the diffusion coefficents $d_1$ and $d_2$ depend
on the spatial variable if $d_1(x)$ and $d_2(x)$ are strictly positive.
The condition $a\ge 1$ corresponds to ``very fast diffusion''.
In numerical simulations, usually the values $a=b=4$ are chosen \cite[Table 1]{EPL01}.
\qed\end{remark}

Our first main result concerns the existence of solutions to the numerical scheme.
We introduce the function
\begin{equation}\label{2.defZ}
  Z(M) := \int_{M^D}^M F(s)ds - F(M^D)(M_K-M^D), \quad M\in[0,1).
\end{equation}

\begin{theorem}[Existence of discrete solutions]\label{thm.ex}
Assume that Hypotheses (H1)--(H5) hold.
Then, for every $k=1,\ldots,N_T$, there exists a solution $(S^k, M^k)$ 
to scheme \eqref{sch.ic}--\eqref{Flux} satisfying
\begin{equation}\label{2.bounds.thm}
  0 \le S^k_K \le 1, \quad 0 \le  M^k_K < 1 \quad \mbox{for all }K \in \T,
\end{equation}
and there exist positive constants $C_1$ and $C_2$ independent of $\Delta x$ 
and $\Delta t$ such that
\begin{equation}\label{2.estimFM}
  \|Z(M^k)\|_{0,1,\M}
	+ \Delta t C_1\|F(M^k)\|_{1,2,\M}^2
	\le \|Z(M^{k-1})\|_{0,1,\M}
	+ \Delta t C_2.
\end{equation}
Moreover, if $M^0 \ge m_0$ in $\Omega$ and $M_D \ge m_0$ for some $m_0>0$ then
\begin{equation}\label{2.strict.pos}
  M^k_K \ge m_0 \exp(-\kappa_2 t_k) \quad \mbox{for all }K \in \T, \ k=1,\ldots, N_T.
\end{equation}
\end{theorem}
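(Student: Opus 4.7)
The plan is to proceed by induction on $k$. The base case is provided by the projections \eqref{sch.ic}, and the induction hypothesis gives $(S^{k-1},M^{k-1})$ satisfying \eqref{2.bounds.thm}. To construct $(S^k,M^k)$ I would first regularize via the approximate entropy variable mentioned in the introduction. For $\eps>0$ I set $\Psi_\eps\colon(0,1)\to\R$, $\Psi_\eps(M):=F(M)-F(M^D)+\eps\log(M/M^D)$, which is a continuous strictly increasing bijection with $\Psi_\eps(M)\to-\infty$ as $M\to 0^+$ (due to $\eps\log M$) and $\Psi_\eps(M)\to+\infty$ as $M\to 1^-$ (due to the singularity of $F$). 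The regularized scheme then treats the unknowns $(S_K^k,W_K^{\eps,k})_{K\in\T}\in\R^{2\#\T}$, with $M_K^{\eps,k}:=\Psi_\eps^{-1}(W_K^{\eps,k})\in(0,1)$ enforced automatically, and the biomass flux $\F_{M,K,\sigma}^k$ reinterpreted in terms of $\Psi_\eps(M^{\eps,k})$. Existence at fixed $\eps$ follows from a topological degree / Brouwer argument on a large ball in $\R^{2\#\T}$, combined with the a priori bounds proved below.

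The bounds $0\le S_K^k\le 1$ are obtained by standard discrete monotonicity: testing \eqref{schS} against $-(S_K^k)_-$ and against $(S_K^k-1)_+$, summing over $K$, applying \eqref{2.ibp} (with $S_\sigma^k=1$), and using that $g(S,M)\le 0$ for $S\ge 0$, yields $\sum_K\m(K)((S_K^k)_-)^2\le 0$ and $\sum_K\m(K)((S_K^k-1)_+)^2\le 0$. For the entropy estimate, I would multiply the regularized biomass equation by $\Delta t\,W_K^{\eps,k}$ and sum. Convexity of $Z$ and of $s\mapsto s\log(s/M^D)-s$ yields, for the time-discrete derivative, the lower bound $\sum_K\m(K)(Z(M_K^{\eps,k})-Z(M_K^{\eps,k-1}))$ plus a nonnegative $\eps$-entropy. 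Integration by parts \eqref{2.ibp}, together with $W_\sigma^{\eps,k}=0$ on $\E_{\rm ext}$ and monotonicity of both $F$ and $\log$, produces $\Delta t\,d_2|F(M^{\eps,k})|_{1,2,\M}^2$ plus a nonnegative crossed gradient term. The reaction contribution is $\Delta t\sum_K\m(K)h(S_K^k,M_K^{\eps,k})W_K^{\eps,k}$; boundedness $|h|\le\kappa_2+\kappa_3$ (since $M^{\eps,k}_K\in(0,1)$), the discrete Poincaré inequality applied to $F(M^{\eps,k})-F(M^D)$ (which vanishes on $\E_{\rm ext}$), and Young's inequality let me absorb the $L^2$ part into $|F(M^{\eps,k})|_{1,2,\M}^2$, yielding \eqref{2.estimFM} with constants independent of $\eps$.

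Passing to the limit $\eps\to 0$ is done in the finite-dimensional space $\R^{2\#\T}$: the entropy estimate implies in particular $F(M_K^{\eps,k})\le C/\m(K)$ uniformly in $\eps$, hence $M_K^{\eps,k}$ stays bounded away from $1$; also $0<M_K^{\eps,k}<1$ and $0\le S_K^{\eps,k}\le 1$ are bounded. Extracting a subsequence and using continuity of $F$ on $[0,1)$ produces $(S^k,M^k)$ satisfying \eqref{schS}--\eqref{Flux}, the bounds \eqref{2.bounds.thm} (with $0\le M_K^k<1$, losing strict positivity only in the limit), and \eqref{2.estimFM} by lower semicontinuity. For \eqref{2.strict.pos}, I would argue by a discrete minimum principle applied directly to \eqref{schM} (without regularization, once existence is secured): let $K^*$ realize $\min_K M_K^k$; by monotonicity of $F$ and since $M^D\ge m_0\ge M_{K^*}^k$ at boundary edges, one has $\DD_{K^*,\sigma}F(M^k)\ge 0$ for every $\sigma\in\E_{K^*}$, so $\sum_\sigma\F_{M,K^*,\sigma}^k\le 0$. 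Combined with $h(S,M)\ge-\kappa_2 M$, the scheme gives $(1+\kappa_2\Delta t)M_{K^*}^k\ge M_{K^*}^{k-1}\ge\min_L M_L^{k-1}$, and iterating together with $(1+\kappa_2\Delta t)^{-k}\ge\exp(-\kappa_2 t_k)$ produces \eqref{2.strict.pos}.

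The hardest step is the $\eps$-uniform entropy estimate, specifically the control of the reaction term tested against the entropy variable $W^{\eps,k}$: because $F(M^{\eps,k})$ can become arbitrarily large near the upper bound $M=1$, one really needs to exploit that $F(M^{\eps,k})-F(M^D)$ has homogeneous Dirichlet trace on $\E_{\rm ext}$ in order to invoke the discrete Poincaré inequality and absorb the problematic contribution into $d_2|F(M^{\eps,k})|_{1,2,\M}^2$. This is also the step that makes the Dirichlet datum $M^D<1$ (and its strict positivity, to make $\log(M^{\eps,k}/M^D)$ vanish on the boundary) structurally indispensable in the argument.
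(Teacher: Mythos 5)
Your proposal follows essentially the same route as the paper: regularization through the entropy variable $W^\eps_K=F(M_K)-F(M^D)+\eps\log(M_K/M^D)$ (the paper's $Z_\eps'$), a topological-degree argument for the regularized scheme, truncation arguments for $0\le S_K^k\le 1$, the $\eps$-uniform entropy estimate obtained by testing with $\Delta t\,W_K^\eps$ (exactly the paper's Lemma~\ref{lem.grad}, including the nonnegative crossed term $\eps\,\DD_{K,\sigma}F(M^\eps)\DD_{K,\sigma}\log M^\eps$ and the Poincar\'e--Young absorption of the reaction term), and the limit $\eps\to0$ via finiteness of $F(M_K^k)$. The only deviation is the lower bound \eqref{2.strict.pos}, where you invoke a discrete minimum principle at the cell realizing $\min_K M_K^k$ while the paper tests \eqref{schM} with $\Delta t[M_K^k-m^k]_-$ (Stampacchia truncation); both arguments are correct and produce the same barrier $m^k=m_0(1+\kappa_2\Delta t)^{-k}\ge m_0e^{-\kappa_2 t_k}$.
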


The existence result is proved by a fixed-point argument based on a topological
degree result. The main difficulty is to approximate the equations in such
a way that the singular point $M=1$ is avoided. This can be done,
as in \cite{EZE09}, by introducing a cut-off approximation $f_\eps(M)$ of $f(M)$.
Then, by the comparison principle, it is possible to show the bound
$M^\eps\le 1-\delta(\eps)$ for the approximate biomass $M^\eps$, where $\delta(\eps)
\in(0,1)$.
Since the comparison principle cannot be easily extended to the discrete case, 
we have chosen another approach. We introduce the ``entropy variable''
$W^\eps_K:=Z_\eps'(M^\eps_K)$, where $Z_\eps$ is the sum of $Z(M^\eps_K)$ and
$\eps$ times the Boltzmann entropy (see \eqref{3.defZ}). Then 
$0<M^\eps_K<1$ by definition and we can derive a uniform estimate similar to
\eqref{2.estimFM}. The uniform bound for $F(M^\eps)$ 
allows us to infer that the a.e.\ limit function
$M_K=\lim_{\eps\to 0}M^\eps_K$ satisfies $M_K<1$ for all $K\in\T$.
The positive lower bound for $M^k$ comes from the fact that the source term 
$h(S_K^k,M_K^k)$ is bounded from below by the linear term $-\kappa_2M_K^k$,
and it is proved by a Stampacchia truncation method.

\begin{theorem}[Uniqueness of discrete solutions]\label{thm.unique}
Assume that Hypotheses (H1)--(H5) hold and that there exists a constant 
$m_0>0$ such that $M^0(x) \ge m_0$ for $x \in \Omega$ and $M^D \ge m_0$.
Then there exists $\gamma^*>0$, depending on the data, the mesh, and $m_0$, 
such that for all $0<\Delta t<\gamma^*$, 
there exists a unique solution to scheme \eqref{sch.ic}--\eqref{Flux}.
\end{theorem}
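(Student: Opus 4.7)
The plan is to fix a time step $k$, consider two discrete solutions $(S_1^k,M_1^k)$ and $(S_2^k,M_2^k)$ issuing from the same previous state $(S^{k-1},M^{k-1})$, and show that the differences $\sigma_K:=S_{1,K}^k-S_{2,K}^k$ and $\mu_K:=M_{1,K}^k-M_{2,K}^k$ vanish provided $\Delta t$ is small. The crucial preliminary is to establish bounds of the form $m_T\le M_{i,K}^k\le 1-\delta$ with $m_T,\delta>0$ depending only on the data, the mesh, and $m_0$, but not on $\Delta t$; this will render $f=F'$ bounded above and below by strictly positive constants on the range of the discrete biomass. The strict positivity $M_{i,K}^k\ge m_T:=m_0\exp(-\kappa_2 T)>0$ is provided by \eqref{2.strict.pos}. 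For the upper bound, iterating \eqref{2.estimFM} from step $1$ up to $k$ and using that $Z(M_K^0)$ is finite (since $M^0<1$) yields $\sum_{K\in\T}\m(K)Z(M_{i,K}^k)\le\|Z(M^0)\|_{0,1,\M}+TC_2$, which is finite and $\Delta t$-independent. Since $\T$ is finite and $Z(M)\to+\infty$ as $M\to 1$, this furnishes $\delta>0$ with $M_{i,K}^k\le 1-\delta$; one sets $c_f:=f(m_T)>0$ and $L_f:=f(1-\delta)<\infty$ (note that $f$ is monotone increasing on $(0,1)$).

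Subtracting the two copies of \eqref{schS}, multiplying by $\sigma_K$, and applying the discrete integration by parts \eqref{2.ibp} (the boundary contribution vanishes because $\sigma_\sigma=0$ on $\E_{\rm ext}$) yields
\[
\tfrac{1}{\Delta t}\|\sigma\|_{0,2,\M}^2+d_1|\sigma|_{1,2,\M}^2=\sum_{K\in\T}\m(K)\bigl(g(S_{1,K}^k,M_{1,K}^k)-g(S_{2,K}^k,M_{2,K}^k)\bigr)\sigma_K.
\]
Since $g$ is Lipschitz on $[0,1]^2$, Young's inequality produces $\|\sigma\|_{0,2,\M}^2\le A_1\Delta t\,\|\sigma\|_{0,2,\M}^2+A_2\Delta t\,\|\mu\|_{0,2,\M}^2$. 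The natural test for \eqref{schM} is $F(M_{1,K}^k)-F(M_{2,K}^k)$ because the flux is linear in $F$: after discrete IBP the flux term becomes $d_2|F(M_1^k)-F(M_2^k)|_{1,2,\M}^2\ge 0$, the time derivative is controlled from below via the mean value theorem by $\mu_K(F(M_{1,K}^k)-F(M_{2,K}^k))\ge c_f\mu_K^2$, and the source is estimated using the Lipschitz continuity of $h$ on $[0,1]^2$ together with $|F(M_{1,K}^k)-F(M_{2,K}^k)|\le L_f|\mu_K|$. This yields $\|\mu\|_{0,2,\M}^2\le A_3\Delta t\,\|\mu\|_{0,2,\M}^2+A_4\Delta t\,\|\sigma\|_{0,2,\M}^2$, with the constants $A_3,A_4$ depending on $c_f,L_f,L_h$, hence on the data, the mesh, and $m_0$.

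Adding the two inequalities gives $(\|\sigma\|_{0,2,\M}^2+\|\mu\|_{0,2,\M}^2)(1-A_*\Delta t)\le 0$ with $A_*:=\max(A_1+A_4,A_2+A_3)$. Choosing $\gamma^*:=1/A_*$ forces $\sigma=\mu=0$ whenever $\Delta t<\gamma^*$, proving uniqueness. The main obstacle is the preliminary step: securing the upper bound $M_{i,K}^k\le 1-\delta$ with $\delta$ independent of $\Delta t$ is essential, because the singular factor $(1-M)^{-a}$ in $f$ would otherwise destroy any Lipschitz control of $F$ and allow $\gamma^*$ to shrink to zero with $\Delta t$. The key observation is that the iteration of \eqref{2.estimFM} yields a $\Delta t$-free $\ell^1$-type bound on $Z(M^k)$ which, on a fixed finite mesh, transforms into the required cell-wise upper bound depending only on the data, the mesh, and $m_0$, exactly as in the statement of the theorem.
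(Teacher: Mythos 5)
Your strategy is genuinely different from the paper's: you run a direct $L^2$ energy argument, testing the $S$-difference equation with $S_1^k-S_2^k$ and the $M$-difference equation with $F(M_1^k)-F(M_2^k)$, whereas the paper uses a dual ($H^{-1}$-type) method, introducing discrete Poisson solutions $\psi^k,\phi^k$ with right-hand sides $S_1^k-S_2^k$ and $M_1^k-M_2^k$ and testing with those. The structural consequence is important: in the dual method the nonlinearity enters only through the product $\sum_K\m(K)\big(F(M_{1,K}^k)-F(M_{2,K}^k)\big)\big(M_{1,K}^k-M_{2,K}^k\big)$, which is bounded \emph{below} by $c_0\|M_1^k-M_2^k\|_{0,2,\M}^2$ using only the lower bound $f\ge f(m_T)>0$ on $[m_T,1)$ supplied by \eqref{2.strict.pos}; no upper bound on $f$ is ever needed, so the singularity of $f$ at $M=1$ is completely harmless. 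Your method, by contrast, must estimate the source term against $|F(M_{1,K}^k)-F(M_{2,K}^k)|\le L_f|M_{1,K}^k-M_{2,K}^k|$, and therefore stands or falls with a $\Delta t$-independent bound $M_{i,K}^k\le 1-\delta$. You correctly identify this as the crux, but your derivation of it has a gap.

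The gap is this: you claim $Z(M)\to+\infty$ as $M\to1^-$, so that the iterated estimate $\|Z(M^k)\|_{0,1,\M}\le\|Z(M^0)\|_{0,1,\M}+TC_2$ forces $M_{i,K}^k\le1-\delta$ on a fixed mesh. But $Z(M)=\int_{M^D}^M F(s)\,ds-F(M^D)(M-M^D)$ with $F(s)\sim\frac{1}{a-1}(1-s)^{-(a-1)}$ near $s=1$ (and $F(s)\sim-\log(1-s)$ for $a=1$), so $\int^1 F(s)\,ds$ \emph{converges} whenever $1\le a<2$, and $Z$ then stays bounded on $[0,1)$. Hypothesis (H5) allows any $a\ge1$, so for $a\in[1,2)$ the $L^1$ bound on $Z(M^k)$ yields no upper bound on $M$ away from $1$, and your constants $L_f$, $A_3$, $A_4$ are not controlled. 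The fallback of using $\Delta t\,C_1\|F(M^k)\|_{1,2,\M}^2\le C$ from \eqref{2.estimFM} only gives $F(M_K^k)\le C(\Delta t\,\m(K))^{-1/2}$, hence $L_f\lesssim\Delta t^{-a/(2(a-1))}$, and the resulting smallness condition $\Delta t\,A_*<1$ scales like $\Delta t^{(a-2)/(2(a-1))}<C$, which fails as $\Delta t\to0$ precisely when $a<2$. So your proof is complete only for $a\ge2$ (where it is a valid, more elementary alternative on a fixed mesh); for $1\le a<2$ the required upper bound is missing and the direct $L^2$ route breaks down, which is exactly the difficulty the paper's dual method is designed to circumvent.
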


The proof of the theorem is based on a discrete version of the dual method.
On the continuous level, the idea is to choose test functions $\psi$ and $\phi$
solving $-\Delta\psi=S_1-S_2$ and $-\Delta\phi=M_1-M_2$ with homogeneous
Dirichlet boundary data, where $(S_1,M_1)$
and $(S_2,M_2)$ are two solutions to \eqref{1.eqS}--\eqref{1.eqM}
with the same initial data, and to exploit
the monotonicity of the nonlinearity $F(M)$. On the discrete level,
we replace the diffusion equations for $\psi$ and $\phi$ by the corresponding
finite-volume schemes and estimate similarly as in the continuous case.
The restriction on the time step size is due to $L^2(\Omega)$ estimates
coming from the source terms.

We also prove that our scheme converges to the continuous model,
up to a subsequence. For this result,
we introduce a family $({\mathcal D}_m)_{m\in\N}$ of admissible space-time 
discretizations of $\Omega_T$ indexed by the size 
$\eta_m=\max\{\Delta x_m,\Delta t_m\}$ of the mesh, satisfying $\eta_m\to 0$
as $m\to\infty$. We denote by $\M_m$ the corresponding meshes of $\Omega$ and
by $\Delta t_m$ the corresponding time step sizes.  
Finally, we set $\na^m:=\na^{\mathcal{D}_m}$.

\begin{theorem}[Convergence of the scheme]\label{thm.conv}
Assume that the Hypotheses (H1)--(H5) hold. 
Let $({\mathcal D}_m)_{m \in \N}$ be a family of admissible meshes satisfying 
\eqref{2.regmesh} 
uniformly and let 
$(S_m,M_m)_{m\in\N}$ be a corresponding sequence of finite-volume solutions to 
scheme \eqref{sch.ic}--\eqref{Flux} constructed in Theorem \ref{thm.ex}. 
Then there exist $(S,M)\in L^\infty(\Omega_T;\R^2)$ 
and a subsequence of $(S_m,M_m)$ (not relabeled)
such that, as $m \to \infty$, 
\begin{align*}
	S_m \rightarrow S, \quad M_m\to M \quad &\text{a.e. in } \Omega_T, \\
	\na^{m} S_m \rightharpoonup \na S, \quad 
	\na^{m} F(M_m) \rightharpoonup \na F(M) \quad &\text{weakly in } L^2(\Omega_T).
\end{align*}
The functions $S-1$ and $F(M)-F(M^D)$ belong to the space $L^2(0,T;H_0^1(\Omega))$. Moreover, the limit $(S,M)$ is a weak solution to \eqref{1.eqS}--\eqref{1.bic}, i.e., 
for all $\psi$, $\phi \in C_0^\infty(\Omega \times [0,T))$, 
\begin{align}\label{weakS}
	-\int_0^T &\int_\Omega S \pa_t \psi dx dt - 
	\int_\Omega S^0(x) \psi(x,0) dx 
	+ d_1\int_0^T \int_\Omega \na S \cdot \na \psi dx dt \\
	&= \int_0^T \int_\Omega g(S,M) \psi dx dt, \nonumber \\
  \label{weakM}
	-\int_0^T & \int_\Omega M \pa_t \phi dx dt
	- \int_\Omega M^0(x) \phi(x,0) dx 
	+ d_2\int_0^T \int_\Omega \na F(M) \cdot \na \phi dx dt \\ 
	&= \int_0^T\int_\Omega h(S,M) \phi dx dt. \nonumber
\end{align}
\end{theorem}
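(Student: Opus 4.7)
The plan is to follow the now standard finite-volume convergence strategy (in the spirit of Eymard--Gall\"ouet--Herbin and Andreianov--Bendahmane--Karlsen): first extract a subsequence via uniform a priori estimates and discrete compactness, then identify the weak limits of the discrete gradients, and finally pass to the limit in the scheme against smooth test functions.

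First I would collect the uniform estimates established in Section \ref{sec.est}. Theorem \ref{thm.ex} already gives the $L^\infty$ bounds $0\le S_m\le 1$ and $0\le M_m<1$ uniformly in $m$, together with a discrete $L^2(0,T;H^1)$ bound on $F(M_m)$ via \eqref{2.estimFM}. A completely analogous energy estimate, obtained by multiplying \eqref{schS} by $S_K^k-1$ and summing, yields a uniform $L^2(0,T;H^1)$ bound on $S_m-1$. I would also derive a uniform estimate on the discrete time translates of $S_m$ and $M_m$ in $L^2$, following the standard argument: test \eqref{schS}--\eqref{schM} by $S_K^k-S_K^{k-\ell}$ (resp.\ $F(M_K^k)-F(M_K^{k-\ell})$) over $\ell$ consecutive steps, then use monotonicity of $F$ together with the $H^1$-type bound to control $\|M_m(\cdot,\cdot+\tau)-M_m\|_{L^2}^2\le C(\tau+\Delta t_m)$.

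Equipped with these, I would apply a discrete Kolmogorov--Riesz compactness result (e.g.\ \cite[Thm.~3.9]{EGH00} or its time-dependent version) to obtain, up to a subsequence, $S_m\to S$ and $F(M_m)\to \xi$ strongly in $L^2(\Omega_T)$ and a.e.\ in $\Omega_T$, together with $\nabla^m S_m\rightharpoonup \nabla S$ and $\nabla^m F(M_m)\rightharpoonup \nabla\xi$ weakly in $L^2(\Omega_T)$; the identification of the weak limit as the actual gradient of the strong limit is a standard property of the dual-mesh gradient operator (see, e.g., \cite[Lem.~4.4]{CLP03}). Since $F:[0,1)\to[0,\infty)$ is a continuous strictly increasing bijection, $M_m=F^{-1}(F(M_m))\to F^{-1}(\xi)=:M$ a.e.\ in $\Omega_T$, hence (by dominated convergence) in every $L^p(\Omega_T)$ with $p<\infty$, and $\xi=F(M)$ with $0\le M<1$ a.e. The boundary traces $S-1,\ F(M)-F(M^D)\in L^2(0,T;H^1_0(\Omega))$ follow because the reconstructed functions take the prescribed constant boundary values on $\E_{\rm ext}$, combined with the weak consistency of the discrete trace.

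Passing to the limit in the weak formulation is then done term by term. Fix $\psi,\phi\in C_0^\infty(\Omega\times[0,T))$, define $\psi_K^k:=\psi(x_K,t_k)$, $\phi_K^k:=\phi(x_K,t_k)$, multiply \eqref{schS} and \eqref{schM} by $\Delta t\,\psi_K^{k-1}$ and $\Delta t\,\phi_K^{k-1}$, sum over $K$ and $k$, and apply the discrete integration by parts \eqref{2.ibp}. The discrete time-derivative terms yield, after an Abel summation, the distributional time-derivative and initial contributions in \eqref{weakS}--\eqref{weakM} by strong $L^2$-convergence of $S_m$ and $M_m$ and the smoothness of $\psi,\phi$. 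The diffusion terms, rewritten as $\int\int \nabla^m S_m\cdot\nabla^m\psi$ and $\int\int \nabla^m F(M_m)\cdot\nabla^m\phi$ (up to a consistency error tending to $0$ under \eqref{2.regmesh}), pass to the limit by the weak-strong pairing, since $\nabla^m\psi\to\nabla\psi$ and $\nabla^m\phi\to\nabla\phi$ strongly in $L^2$. The reaction terms converge by $L^\infty$ bounds and a.e.\ convergence, as $g$ and $h$ are continuous and bounded on $[0,1]\times[0,1)$ thanks to $\kappa_4>0$.

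The main obstacle I anticipate is twofold: obtaining the time-translate estimate for $M_m$ despite the degenerate-singular character of $F$, and upgrading the weak convergence of $\nabla^m F(M_m)$ to an identification of the limit as $\nabla F(M)$ given that $M_m$ may approach the singular value $1$. Both are resolved by working with $F(M_m)$ rather than $M_m$ as the primary variable: the uniform $L^2(0,T;H^1)$ bound is on $F(M_m)$, so the time translate estimate is naturally performed in this variable, and the a.e.\ convergence of $M_m$ is recovered through the continuous inverse $F^{-1}$. The only delicate point is ensuring the consistency error of the discrete flux $-\tau_\sigma d_2 \DD_{K,\sigma}F(M^k)$ against the test function vanishes, which again reduces to the standard consistency of the two-point flux approximation on admissible meshes.
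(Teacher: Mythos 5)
Your overall architecture (uniform estimates, space--time compactness, identification of the discrete gradients, term-by-term passage to the limit against smooth test functions) matches the paper, and the gradient-identification, trace, and limit-passage steps are essentially the ones carried out in Section \ref{sec.conv}. However, there is a genuine gap in your compactness step. You claim an $L^2$ time-translate estimate $\|M_m(\cdot,\cdot+\tau)-M_m\|_{L^2}^2\le C(\tau+\Delta t_m)$ obtained by testing with $F(M_K^k)-F(M_K^{k-\ell})$ and invoking ``monotonicity of $F$''. What this Alt--Luckhaus-type computation actually controls is the quantity $\int\!\!\int\big(M_m(t+\tau)-M_m(t)\big)\big(F(M_m(t+\tau))-F(M_m(t))\big)\,dx\,dt$. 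To convert this into your claimed $L^2$ translate of $M_m$ you need $(u-v)^2\le C(u-v)(F(u)-F(v))$, i.e.\ $F'=f$ bounded away from zero; but $f(M)=M^b(1-M)^{-a}$ vanishes at $M=0$ whenever $b>0$ (the relevant case, e.g.\ $b=4$), so this fails. The alternative conversion, to an $L^2$ translate of $F(M_m)$, would need $F$ Lipschitz, which also fails because $F$ blows up at $M=1$ (here $a\ge 1$). Even the classical Alt--Luckhaus lemma in the form ``$|F(u)-F(v)|\le\eps+C_\eps(u-v)(F(u)-F(v))$'' requires uniform continuity of $F$ on the range of the solutions, which is $[0,1)$ where $F$ is unbounded; one would need an additional truncation argument exploiting the uniform $L^2$ bound on $F(M_m)$, which you do not supply. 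A discrete Kolmogorov--Riesz theorem therefore cannot be applied as stated.

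The paper circumvents exactly this difficulty by proving only a \emph{weak} time-translate estimate (Lemma \ref{lem.unif2}: the discrete time increments of $M_m$ tested against $C_0^\infty$ functions are bounded by $\|\na\phi\|_{L^\infty(\Omega_T)}$) and then invoking the nonlinear discrete compensated-compactness theorem of Andreianov--Canc\`es--Moussa \cite[Theorem 3.9]{ACM17}, which combines this weak time control on $M_m$ with the spatial $H^1$-type control on the nonlinear quantity $F(M_m)$ from Lemma \ref{lem.unif1} and yields a.e.\ convergence of $M_m$ (hence of $F(M_m)$) with no strong monotonicity or Lipschitz assumption on $F$. If you wish to keep a translate-based argument instead, you must either work with the Alt--Luckhaus quantity itself plus a truncation of $F$ at level $N$ (using $\m\{F(M_m)>N\}\le C/N^2$ to control the remainder uniformly in $m$), or restrict to $b=0$; as written, the step from the energy estimates to strong $L^2$ compactness of $M_m$ does not go through.
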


The convergence proof is based on the uniform estimates derived for the proof of Theorem \ref{thm.ex} and a discrete compensated compactness technique 
\cite{ACM17} needed to identify the nonlinear limits. 
For the limit $m\to\infty$, we use the techniques of \cite{CLP03}. 
If uniqueness for the limiting model holds in the class of weak solutions,
the whole sequence $(S_m,M_m)$ converges. Uniqueness in a smaller class of
functions is proved \cite[Theorem 3.2]{EZE09}, but we have been
unable to show the required regularity of the limit $(S,M)$ from our
approximate system, since the time discretization is not compatible with
the technique of \cite{EZE09}.

\begin{remark}\rm
We could adapt the construction of scheme \eqref{sch.ic}--\eqref{Flux} and the proofs of our main results, Theorem \ref{thm.ex} and Theorem \ref{thm.conv}, for the approximation of the solution to a quorum-sensing-induced biofilm dispersal model introduced in \cite{ESE17}, which can be seen as a generalization of \eqref{1.eqS}--\eqref{1.F}.
\qed
\end{remark}


\section{Existence of solutions}\label{sec.ex}

For the proof of Theorem \ref{thm.ex}, we proceed by induction. 
By Hypothesis (H3), $0\le S_K^{0}\le 1$, $0\le M_K^{0}<1$ holds for $K\in\T$.
Let $(S^{k-1},M^{k-1})$ satisfy $0\le S_K^{k-1}\le 1$, $0\le M_K^{k-1}<1$
for all $K\in\T$ and some $k\in\{1,\ldots,N_T\}$. 
We use the function $Z_\eps:[0,1)\to\R$, defined by
\begin{equation}\label{3.defZ}
  Z_\eps(M) = \int_0^M F(s)ds - F(M^D)(M-M^D) + \eps\bigg(M\log\frac{M}{M^D}
	+ M^D-M\bigg), 
\end{equation}
where $\eps>0$ and $F(M)$ is given in \eqref{1.F}.

{\em Step 1: Definition of a linearized problem.} 
Let $R>0$ and set
$$
  {\mathcal K}_R := \big\{(S,W)\in\R^{2\theta}:\|S\|_{0,2,\M}<R,\ \|W\|_{1,2,\M}<R,
	\ S_\sigma=1,\ W_\sigma=0\mbox{ for }\sigma\in\E_{\rm ext}\big\},
$$
where $\theta=\#\T+\#\E_{\rm ext}$. We define the fixed-point
mapping $Q:{\mathcal K}_R\to\R^{2\theta}$ by $Q(S,W)=(S^\eps,W^\eps)$, 
where $(S^\eps,W^\eps)$ solves
\begin{align}
  & \frac{\m(K)}{\Delta t}(S^\eps_K-S_K^{k-1}) + \sum_{\sigma\in\E_K}\F_{S,K,\sigma}
	= \m(K)g([S_K]_+,M_K), \label{3.linS} \\
	& \eps\bigg(\m(K)W_K^\eps - \sum_{\sigma\in\E_K}\tau_\sigma\DD_{K,\sigma}W^\eps\bigg) 
	\label{3.linM} \\
	&\phantom{xx}{}
	= -\frac{\m(K)}{\Delta t}(M_K-M_K^{k-1}) - \sum_{\sigma\in\E_K}\F_{M,K,\sigma}
	+ \m(K)h([S_K]_+,M_K), \nonumber
\end{align}
the fluxes are as in \eqref{Flux}, $[z]_+:=\max\{0,z\}$,
and we impose the Dirichlet boundary
conditions $S_\sigma^\eps=1$, $W_\sigma^\eps=0$ for $\sigma\in\E_{\rm ext}$.
The value $M_K$ is a function of $W_K$, implicitly defined by
\begin{equation}\label{3.W}
  W_K = Z_\eps'(M_K) = F(M_K) - F(M^D) + \eps\log\frac{M_K}{M^D}, \quad K\in\T.
\end{equation}
The map $(0,1)\to\R$, $M_K\mapsto W_K$ is invertible because the function
$Z_\eps'$ is increasing. This shows that $M_K$ is well defined and $M_K\in(0,1)$
for $K\in\T$. The existence of a unique solution $(S^\eps,W^\eps)$ to
\eqref{3.linS}--\eqref{3.linM} is a consequence of \cite[Lemma 9.2]{EGH00}.

We claim that $Q$ is continuous. To show this, we first multiply \eqref{3.linM}
by $W_K^\eps$, sum over $K\in\T$, and use the discrete integration-by-parts
formula \eqref{2.ibp}:
\begin{align*}
  \eps\|W^\eps\|_{1,2,\M}^2 
	&= \eps\sum_{K\in\T}\bigg(\m(K)(W_K^\eps)^2 - \sum_{\sigma\in\E_K}\tau_\sigma
	\DD_{K,\sigma}(W^\eps) W_K^\eps\bigg)
	= J_1+J_2+J_3, \quad\mbox{where} \\
	J_1 &= -\sum_{K\in\T}\frac{\m(K)}{\Delta t}(M_K-M_K^{k-1})W_K^\eps, \\
	J_2 &= -\sum_{K\in\T}\sum_{\sigma\in\E_K}\F_{M,K,\sigma}W_K^\eps, \\
	J_3 &= \sum_{K\in\T}\m(K)\bigg(\frac{\kappa_3[S_K]_+}{\kappa_4+[S_K]_+}
	- \kappa_2\bigg)M_KW_K^\eps.
\end{align*}
By the Cauchy--Schwarz inequality and the bound $0<M_K<1$, we find that
\begin{align*}
  |J_1| &\le \frac{2}{\Delta t}\m(\Omega)^{1/2}\|W^\eps\|_{0,2,\M}, \\
	|J_2| &\le \bigg(\sum_{K\in\T}\frac{1}{\m(K)}\sum_{\sigma\in\E_K}|\F_{M,K,\sigma}|^2
	\bigg)^{1/2}\|W^\eps\|_{0,2,\M}, \\
	|J_3| &\le \bigg(\frac{\kappa_3}{\kappa_4+1}+\kappa_2\bigg)\m(\Omega)^{1/2}
	\|W^\eps\|_{0,2,\M}.
\end{align*}
Because of the assumption $\|W\|_{1,2,\M}<R$, the flux $|\F_{M,K,\sigma}|$ is 
bounded from above by a constant depending on $R$. This implies that
$|J_2|\le C(R)\|W^\eps\|_{0,2,\M}$, where $C(R)>0$ is some constant.
(Here and in the following, we denote by $C$, $C_i>0$ generic constants
whose value change from line to line.)
This shows that $\eps\|W^\eps\|_{1,2,\M}\le C(R)$ for (another)
constant $C(R)>0$. Using similar arguments, we obtain the existence of $C(R)>0$
such that $\|S^\eps\|_{0,2,\M}\le C(R)$. 

Next, let $(S_n,W_n)_{n\in\N}\subset{\mathcal K}_R$ be a sequence satisfying
$(S_n,W_n)\to (S,W)$ as $n\to\infty$. The previous uniform estimates for
$(S_n^\eps,W_n^\eps):=Q(S_n,W_n)$ show that $(S_n^\eps,W_n^\eps)$ is
bounded uniformly in $n\in\N$. 
Therefore, there exists a subsequence which is not relabeled such that
$(S_n^\eps,W_n^\eps)\to (S^\eps,W^\eps)$ as $n\to\infty$. Taking the limit 
$n\to\infty$ in \eqref{3.linS}--\eqref{3.linM}, we see that $(S^\eps,W^\eps)
= Q(S,W)$. We deduce from the uniqueness of the limit that the whole sequence
converges, which means that $Q$ is continuous.

{\em Step 2: Definition of the fixed-point operator.} 
We claim that $Q$ admits a fixed point.
We use a topological degree argument \cite[Chap.~1]{Dei85} and prove that
$\operatorname{deg}(I-Q,{\mathcal K}_R,0)=1$, where deg is the Brouwer
topological degree.  Since deg is invariant by homotopy, it is sufficient to
show that any solution $(S^\eps,W^\eps,\rho)\in \overline{{\mathcal K}}_R
\times[0,1]$ to the fixed-point equation $(S^\eps,W^\eps)=\rho Q(S^\eps,W^\eps)$
satisfies $(S^\eps,W^\eps,\rho)\not\in\pa{\mathcal K}_R\times[0,1]$ for sufficiently
large values of $R>0$. Let $(S^\eps,W^\eps,\rho)$ be a fixed point and assume
that $\rho\neq 0$, the case $\rho=0$ being clear. Then $(S^\eps,W^\eps)$ solves
\begin{align}
  & \frac{\m(K)}{\Delta t}(S^\eps_K-\rho S_K^{k-1}) 
	+ \rho\sum_{\sigma\in\E_K}\F_{S,K,\sigma}^\eps
	= \rho\m(K)g([S^\eps_K]_+,M_K^\eps), \label{3.linS2} \\
	& \eps\bigg(\m(K)W_K^\eps - \sum_{\sigma\in\E_K}\tau_\sigma\DD_{K,\sigma}W^\eps\bigg) 
	\label{3.linM2} \\
	&\phantom{xx}{}
	= -\rho\frac{\m(K)}{\Delta t}(M_K-M_K^{k-1}) 
	- \rho\sum_{\sigma\in\E_K}\F_{M,K,\sigma}
	+ \rho\m(K)h([S^\eps_K]_+,M^\eps_K) \nonumber
\end{align}
for $K\in\T$ with the boundary conditions $S_\sigma^\eps=1$, $W_\sigma^\eps=0$
for $\sigma\in\E_K$, the fluxes are given by \eqref{Flux} with $(S,M)$ replaced
by $(S^\eps,M^\eps)$, and $M^\eps_K$ is the unique solution to \eqref{3.W} with
$W_K$ replaced by $W_K^\eps$. 

{\em Step 3: A priori estimates.}
We establish some a priori estimates for the fixed points $(S^\eps,W^\eps)$ 
of $Q$, which are uniform in $R$. Definition \eqref{3.W} immediately gives the
bound $0<M_K^\eps<1$ for all $K\in\T$.

\begin{lemma}[Pointwise bounds for $S^\eps$]\label{lem.S}
The following bounds hold:
$$
  0\le S_K^\eps\le 1 \quad\mbox{for }K\in\T.
$$
\end{lemma}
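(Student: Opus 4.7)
The plan is to prove each bound by a discrete Stampacchia-type truncation argument applied to the linearized fixed-point equation \eqref{3.linS2}, exploiting the induction bounds $0\le S_K^{k-1}\le 1$. For the lower bound I would test with $-[S_K^\eps]_-$ and for the upper bound with $[S_K^\eps-1]_+$, where $[z]_\pm:=\max\{\pm z,0\}$. The mechanism that neutralizes the reaction is different in the two cases: the factor $[S_K^\eps]_+$ in $g([S_K^\eps]_+,M_K^\eps)$ kills the source against $[S_K^\eps]_-$ because $[z]_+[z]_-\equiv 0$, while the pointwise sign $g\le 0$ read off from \eqref{1.source} kills the source against $[S_K^\eps-1]_+$.

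For the lower bound, multiplying \eqref{3.linS2} by $-[S_K^\eps]_-$ and summing over $K\in\T$, the time-derivative contribution expands to
\[
  \frac{1}{\Delta t}\bigl\|[S^\eps]_-\bigr\|_{0,2,\M}^2
  + \sum_{K\in\T}\frac{\m(K)}{\Delta t}\rho S_K^{k-1}[S_K^\eps]_-
  \;\ge\; \frac{1}{\Delta t}\bigl\|[S^\eps]_-\bigr\|_{0,2,\M}^2,
\]
using $-z[z]_-=[z]_-^2$ together with $S_K^{k-1}\ge 0$ and $\rho\ge 0$. The flux is treated via the discrete integration-by-parts formula \eqref{2.ibp}: the boundary contribution vanishes since $[S_\sigma^\eps]_-=[1]_-=0$, and the remaining edge sum $-\rho d_1\sum_\sigma\tau_\sigma\DD_{K,\sigma}S^\eps\cdot\DD_{K,\sigma}[S^\eps]_-$ is bounded below by $\rho d_1\sum_\sigma\tau_\sigma(\DD_{K,\sigma}[S^\eps]_-)^2\ge 0$ through the elementary inequality $(a-b)([b]_- -[a]_-)\ge ([b]_- -[a]_-)^2$ applied to $(a,b)=(S_K^\eps,S_{K,\sigma}^\eps)$. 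Combined with the vanishing of the source, this yields $\|[S^\eps]_-\|_{0,2,\M}=0$, hence $S_K^\eps\ge 0$.

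The upper bound is symmetric. Testing \eqref{3.linS2} with $[S_K^\eps-1]_+$ and writing $S_K^\eps-\rho S_K^{k-1}=(S_K^\eps-1)+(1-\rho S_K^{k-1})$, the time-derivative term is bounded below by $\|[S^\eps-1]_+\|_{0,2,\M}^2/\Delta t$, since $1-\rho S_K^{k-1}\ge 0$ from $\rho\in[0,1]$ and $S_K^{k-1}\le 1$. The flux is handled by IBP (the boundary term vanishes because $[S_\sigma^\eps-1]_+=[0]_+=0$) and the companion inequality $(a-b)([a]_+-[b]_+)\ge ([a]_+-[b]_+)^2$, yielding a nonnegative contribution; the source is nonpositive as noted. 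Hence $[S^\eps-1]_+\equiv 0$, i.e.\ $S_K^\eps\le 1$. The only delicate point of the whole argument is the sign bookkeeping in the two truncation inequalities on the fluxes; both are classical in the finite-volume setting, and the structural feature that closes the argument is that the Dirichlet value $S_\sigma^\eps=1$ simultaneously annihilates both truncations at the boundary.
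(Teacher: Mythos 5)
Your proof is correct and follows essentially the same route as the paper's: the paper also tests \eqref{3.linS2} with the negative part of $S^\eps$ (resp.\ the positive part of $S^\eps-1$), uses the induction bounds on $S^{k-1}$ and $\rho\le 1$ for the time term, the monotonicity of the truncations for the flux after discrete integration by parts, and the facts that $[z]_+[z]_-\equiv 0$ and $g\le 0$ for the source. The only differences are cosmetic (sign convention for $[\,\cdot\,]_-$ and the quantitative squared lower bound on the flux term, where the paper only invokes nonnegativity).
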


\begin{proof}
First, we multiply \eqref{3.linS2} by $\Delta t[S_K^\eps]_-$, where
$[z]_-=\min\{0,z\}$, and sum over $K\in\T$. Then, after a discrete integration
by parts,
$$
  \sum_{K\in\T}\m(K)[S_K^\eps]_-^2 + \rho d_1\Delta t\sum_{\sigma\in\E}
	\tau_\sigma\DD_{K,\sigma}(S^\eps)\DD_{K,\sigma}[S^\eps]_-
	= \rho\sum_{K\in\T}\m(K)S_K^{k-1}[S_K^\eps]_- \le 0,
$$
since $g([S_K^\eps]_+,M_K^\eps)[S_K^\eps]_-=0$ and $S_K^{k-1}\ge 0$ by
the induction hypothesis. The second term on the left-hand side is nonnegative,
since $z\mapsto [z]_-$ is monotone.
This implies that the first term must be nonpositive, showing that
$[S_K^\eps]_-=0$ and hence $S_K^\eps\ge 0$ for all $K\in\T$. 

To verify the upper bound for $S^\eps$, we multiply \eqref{3.linS2} by
$\Delta t[S_K^\eps-1]_+$, sum over $K\in\T$, and use discrete integration by parts:
\begin{align}
  \sum_{K\in\T}&\m(K)\big((S_K^\eps-1) - (\rho S_K^{k-1}-1)\big)[S_K^\eps-1]_+
	+ \rho d_1\Delta t\sum_{\sigma\in\E}\DD_{K,\sigma}(S^\eps-1)
	\DD_{K,\sigma}[S^\eps-1]_+ \nonumber \\
	&= \rho\Delta t\sum_{K\in\T}\m(K)g\big(S_K^\eps,M_K^\eps\big)[S_K^\eps-1]_+
	\le 0, \label{3.aux}
\end{align}
since we have always $g(S_K^\eps,M_K^\eps)\le 0$.
It follows from the induction hypothesis and $\rho\le 1$
that $\rho S_K^{k-1}\le 1$, and the first term on the left-hand side
can be estimated according to
$$
  \sum_{K\in\T}\m(K)\big((S_K^\eps-1) - (\rho S_K^{k-1}-1)\big)[S_K^\eps-1]_+
	\ge \sum_{K\in\T}\m(K)[S_K^\eps-1]_+^2.
$$
We deduce from the monotonicity of $z\mapsto[z]_+$ that 
the second term on the left-hand side of \eqref{3.aux} is nonnegative as well.
Hence,
$\sum_{K\in\T}\m(K)[S_K^\eps-1]_+^2\le 0$ and consequently $S_K^\eps\le 1$
for all $K\in\T$.
\end{proof}

\begin{lemma}[Estimate for $F(M_K^\eps)$]\label{lem.grad}
There exist constants $C_1$, $C_2>0$, only depending on the given data, 
such that
\begin{align}
  \eps\Delta t&\|W^\eps\|_{1,2,\M}^2 + \rho\|Z(M^\eps)\|_{0,1,\M}
	+ \rho\Delta t C_1\|F(M^\eps)-F(M^D)\|_{1,2,\M}^2 \label{3.Z} \\
	&\le \Delta t C_2 + \|Z_\eps(M^{k-1})\|_{0,1,\M}. \nonumber
\end{align}
\end{lemma}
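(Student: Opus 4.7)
The plan is to multiply \eqref{3.linM2} by $\Delta t W_K^\eps$ and sum over $K\in\T$. Thanks to the Dirichlet condition $W_\sigma^\eps = 0$ on $\E_{\rm ext}$, the discrete integration-by-parts formula \eqref{2.ibp} turns the $\eps$-part of the left-hand side into $\eps\Delta t\|W^\eps\|_{1,2,\M}^2$, giving the first term in \eqref{3.Z}. It remains to control the three right-hand side contributions from \eqref{3.linM2}: the discrete time derivative, the diffusive flux, and the reactive source.

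The time derivative is handled by the convexity of $Z_\eps$. Since $Z_\eps''(M) = f(M) + \eps/M > 0$ on $(0,1)$ and $W_K^\eps = Z_\eps'(M_K^\eps)$ by \eqref{3.W}, the tangent-above-chord inequality yields $(M_K^\eps - M_K^{k-1})W_K^\eps \ge Z_\eps(M_K^\eps) - Z_\eps(M_K^{k-1})$, producing the telescoping pair $\rho\|Z_\eps(M^\eps)\|_{0,1,\M}$ (on the LHS) and $\rho\|Z_\eps(M^{k-1})\|_{0,1,\M}$ (on the RHS). To replace $Z_\eps$ by $Z$ on the LHS, as demanded by \eqref{3.Z}, I would use $Z_\eps(M) - Z(M) = \int_0^{M^D}F(s)\,ds + \eps\bigl[M\log(M/M^D) + M^D - M\bigr] \ge 0$, the bracket being nonnegative by convexity of $x\log x$. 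For the flux, \eqref{2.ibp} combined with the splitting $\DD_{K,\sigma}W^\eps = \DD_{K,\sigma}F(M^\eps) + \eps\DD_{K,\sigma}\log M^\eps$ yields $-\rho\Delta t\, d_2|F(M^\eps)-F(M^D)|_{1,2,\M}^2$ plus a cross term $-\rho\Delta t\, d_2\eps\sum_\sigma\tau_\sigma\DD_{K,\sigma}F(M^\eps)\DD_{K,\sigma}\log M^\eps$ that is nonpositive, and hence may be discarded, because $F$ and $\log$ are both increasing.

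The main obstacle is the source $\rho\Delta t\sum_K\m(K)h([S_K^\eps]_+, M_K^\eps)W_K^\eps$, because $W_K^\eps$ contains the singular contribution $\eps\log M_K^\eps$. I would split $W_K^\eps = (F(M_K^\eps) - F(M^D)) + \eps\log(M_K^\eps/M^D)$ and treat the two pieces separately. For the $F$-part, the uniform bound $|h| \le \kappa_2+\kappa_3$ combined with Cauchy--Schwarz, a discrete Poincar\'e inequality for functions vanishing on the Dirichlet boundary, and Young's inequality absorbs half of the flux term and leaves a contribution bounded by $C\Delta t$. For the $\log$-part, the key observation is the multiplicative structure $|h(S,M)| \le (\kappa_2+\kappa_3)M$, which gives $|h\log(M/M^D)| \le (\kappa_2+\kappa_3)(|M\log M| + M|\log M^D|) \le C(1+|\log M^D|)$ uniformly on $(0,1)$ (via $x|\log x| \le 1/e$); the whole contribution is then at most $C\eps\Delta t\,\m(\Omega) \le C\Delta t$ for $\eps \le 1$. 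Combining the three estimates, using $\rho \le 1$ on the right, and upgrading the $|\cdot|_{1,2,\M}$-seminorm to the full $\|\cdot\|_{1,2,\M}$-norm via discrete Poincar\'e once more, yields \eqref{3.Z} with $C_1,C_2$ depending only on $d_2$, the Monod coefficients, $M^D$, and $\m(\Omega)$.
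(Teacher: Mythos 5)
Your proposal is correct and follows essentially the same route as the paper: multiply \eqref{3.linM2} by $\Delta t W_K^\eps$, use convexity of $Z_\eps$ for the discrete time derivative (then drop the nonnegative difference $Z_\eps-Z$), discard the sign-definite cross term $\eps\sum_\sigma\tau_\sigma\DD_{K,\sigma}F(M^\eps)\DD_{K,\sigma}\log M^\eps$ by monotonicity of $F$ and $\log$, and absorb the source term via the bound $|h|\le(\kappa_2+\kappa_3/(\kappa_4+1))M$, Young's inequality, the discrete Poincar\'e inequality, and $x|\log x|\le 1/e$. The only cosmetic difference is that you keep the flux contribution on the right-hand side (hence your reversed signs), whereas the paper moves it to the left as $J_5$; the estimates are identical.
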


\begin{proof}
We multiply \eqref{3.linM2} by $\Delta t W_K^\eps$, sum over $K$, and use discrete
integration by parts:
\begin{align}
  & \eps\Delta t\|W^\eps\|_{1,2,\M}^2 + J_4 + J_5 = J_6, \quad\mbox{where} 
	\label{3.aux2} \\
	& J_4 = \rho\sum_{K\in\T}\m(K)(M_K^\eps-M_K^{k-1})W_K^\eps, \nonumber \\
	& J_5 = \rho\Delta t d_2\sum_{\sigma\in\E}\tau_\sigma\DD_{K,\sigma}F(M^\eps)
	\DD_{K,\sigma}W^\eps, \nonumber \\
	& J_6 = \rho\Delta t\sum_{K\in\T}\m(K)h(S_K^\eps,M_K^\eps)W^\eps. \nonumber
\end{align}
By the convexity of $Z_\eps$, $(M_K^\eps-M_K^{k-1})Z'_\eps(M_K^\eps)
\ge Z(M_K^\eps)-Z_\eps(M_K^{k-1})$ such that
\begin{align*}
  J_4 &\ge \rho\sum_{K\in\T}\m(K)\bigg\{Z(M_K^\eps) 
	+ \eps\bigg(M_K^\eps \log\frac{M_K^\eps}{M^D} + M^D - M_K^\eps\bigg)
	- Z_\eps(M_K^{k-1})\bigg\} \\
	&\ge \rho\|Z(M_K^\eps)\|_{0,1,\M} - \rho\|Z_\eps(M_K^{k-1})\|_{0,1\M}.
\end{align*}
The definition of $W^\eps_K$ and the monotonicity of the functions $F$ and $\log$
imply that
\begin{align}
  J_5 &= \rho\Delta td_2\sum_{K\in\T}\m(K)\big([\DD_{K,\sigma}(F(M^\eps)-F(M^D))]^2
	+ \eps\DD_{K,\sigma}F(M^\eps)\DD_{K,\sigma}\log M^\eps\big) \label{3.J5} \\
	&\ge \rho\Delta td_2|F(M^\eps)-F(M^D)|_{1,2,\M}^2
	\ge \rho\Delta t d_2C(\xi)\|F(M^\eps)-F(M^D)\|_{1,2\M}^2, \nonumber
\end{align}
where the last step follows from the discrete Poincar\'e inequality
\cite[Theorem 3.2]{BCF15}. Finally, by the Young inequality and taking into account
the bounds $S_K^\eps\le 1$ and $M_K^\eps<1$, we find that
\begin{align*}
  J_6 &\le \rho\Delta t
	\bigg(\kappa_2+\frac{\kappa_3}{\kappa_4+1}\bigg)\sum_{K\in\T}\m(K)
	\bigg(|F(M_K^\eps)-F(M^D)| + \eps M_K^\eps\bigg|\log\frac{M_K^\eps}{M^D}\bigg|
	\bigg) \\
	&\le \frac{\eta}{2}\rho\Delta t\bigg(\kappa_2+\frac{\kappa_3}{\kappa_4+1}\bigg)
	\|F(M^\eps)-F(M^D)\|_{1,2,\M}^2 + \frac{\Delta t}{2\eta}
	\bigg(\kappa_2+\frac{\kappa_3}{\kappa_4+1}\bigg)\m(\Omega) \\
	&\phantom{xx}{}+ \eps\Delta t C(\Omega),
\end{align*}
where $\eta>0$. 
Inserting the estimates for $J_4$, $J_5$, and $J_6$ into \eqref{3.aux2} yields
\begin{align*}
  \eps\Delta t\|W^\eps\|_{1,2,\M}^2 
	&+ \rho\Delta t\bigg(d_2 C(\xi)
	- \frac{\eta}{2}\bigg(\kappa_2+\frac{\kappa_3}{\kappa_4+1}
	\bigg)\bigg)\|F(M^\eps)-F(M^D)\|_{1,2,\M}^2 \\
	&{}+ \rho\|Z(M^\eps)\|_{0,1,\M} 
	\le \rho\|Z_\eps(M^{k-1})\|_{0,1,\M} + \Delta t C(\eta).
\end{align*}
Then, choosing $\eta>0$ sufficiently small shows the conclusion.
\end{proof}

{\em Step 4: Topological degree argument.}
We deduce from the estimates of Lemmas \ref{lem.S}--\ref{lem.grad} that
$$
  \|S^\eps\|_{0,2,\M} \le \m(\Omega)^{1/2}, \quad
	\|W^\eps\|_{1,2,\M} \le \frac{1}{\sqrt{\eps\Delta t}}
	(\|Z_\eps(M^{k-1})\|_{0,1,\M} + \Delta tC)^{1/2}.
$$
Thus, choosing
$$
  R = \max\bigg\{\m(\Omega)^{1/2},\frac{1}{\sqrt{\eps\Delta t}}
	(\|Z_\eps(M^{k-1})\|_{0,1,\M} + \Delta tC)^{1/2}\bigg\} + 1,
$$
we see that $(S^\eps,W^\eps)\not\in\pa{\mathcal K}_R$ and $\operatorname{deg}
(I-Q,{\mathcal K}_R,0)=1$. We conclude that $Q$ admits a fixed point, i.e.\
a solution $(S^\eps,W^\eps)$ to \eqref{3.linS2}--\eqref{3.linM2}.

{\em Step 5: Limit $\eps\to 0$.} Thanks to Lemmas \ref{lem.S}--\ref{lem.grad}
and the bound $0<M_K^\eps<1$, there exist subsequences, which are not relabeled,
such that $S_K^\eps\to S_K^k$, $M_K^\eps\to M_K^k$, and $\eps W_K^\eps\to 0$ as
$\eps\to 0$, where $0\le S_K^k\le 1$ and $0\le M_K^k\le 1$ for all $K\in\T$.
Passing to the limit $\eps\to 0$ in \eqref{3.Z} and taking into account the
lower semicontinuity of $F$, we find that
$$
  \Delta t C_1\|F(M^k)-F(M^D)\|_{0,2,\M}^2 \le \|Z(M^{k-1})\|_{0,1,\M} + \Delta tC
	< \infty.
$$
Thus, $F(M_K^k)$ is finite, which implies that $M_K^k<1$ for any $K\in\T$.
We can perform the limit $\eps\to 0$ in \eqref{3.linS2}--\eqref{3.linM2} to
deduce the existence of a solution $(S^k,M^k)$ to scheme \eqref{sch.ic}--\eqref{Flux}.

{\em Step 6: Positive lower bound for $M^k$.}
Again, we proceed by induction.
Let $M^0\ge m_0$ in $\Omega$ and $M^D\ge m_0$. Then $M_K^0\ge m_0$ for all $K\in\T$.
Set $m^k=m_0(1+\kappa_2\Delta t)^{-k}$. 
The induction hypothesis reads as $M_K^{k-1}\ge m^{k-1}$ for $K\in\T$. We multiply
\eqref{schM} by $\Delta t[M_K^k-m^k]_-$, sum over $K\in\T$, and use discrete
integration by parts:
\begin{align*}
  & \sum_{K\in\T}\m(K)(M_K^k-M_K^{k-1})[M_K^k-m^k]_- = J_7 + J_8, \quad\mbox{where} \\
	& J_7 = -\Delta t\sum_{\sigma\in\E}\tau_\sigma\DD_{K,\sigma}F(M^k)
	\DD_{K,\sigma}[M_K^k-m^k]_-, \\
	& J_8 = \Delta t\sum_{K\in\T}\m(K)h(S^k,M^k)[M_K^k-m^k]_-.
\end{align*}
Taking into account that $M_K^{k-1}-m^{k-1}\ge 0$ and 
$m^k-m^{k-1}=-\kappa_2\Delta tm^k$, we estimate the left-hand side according to
\begin{align*}
  \sum_{K\in\T}&\m(K)(M_K^k-M_K^{k-1})[M_K^k-m^k]_- \\
	&= \sum_{K\in\T}\m(K)\big((M_K^k-m^k)-(M_K^{k-1}-m^{k-1})\big)[M_K^k-m^k]_- \\
	&\phantom{xx}{}+ \sum_{K\in\T}\m(K)(m^k-m^{k-1})[M_K^k-m^k]_- \\
	&\ge \sum_{K\in\T}\m(K)[M_K^k-m^k]_-^2 
	- \kappa_2\Delta t m^k\sum_{K\in\T}\m(K)[M_K^k-m^k]_-.
\end{align*}
Since $F$ and $z\mapsto [z-m^k]_-$ are monotone, we have $J_7\le 0$. Furthermore,
\begin{align*}
  J_8 &= \Delta t\sum_{K\in\T}\m(K)
	\bigg(\frac{\kappa_3 S_K^k}{\kappa_4+S_K^k}-\kappa_2\bigg)M_K^k[M_K^k-m^k]_- \\
	&\le -\kappa_2\Delta t\sum_{K\in\T}\m(K)M_K^k[M_K^k-m^k]_-
	\le -\kappa_2\Delta t\sum_{K\in\T}\m(K)m^k[M_K^k-m^k]_-.
\end{align*}
The terms involving $\kappa_2$ cancel and we end up with
$$
	\sum_{K\in\T}\m(K)[M_K^k-m^k]_-^2 \le 0.
$$
It follows that
$[M_K^k-m^k]_-=0$ and hence $M_K^k\ge m^k\ge m_0\exp(-\kappa_2 k\Delta t)$.


\section{Uniqueness of solutions}\label{sec.unique}

We proceed by induction. Let $k\in\{1,\ldots,N_T\}$, let
$(S_{1}^k,M_{1}^k)$ and $(S_2^k,M_2^k)$ be two solutions to scheme
\eqref{sch.ic}--\eqref{Flux}, and assume that $S_1^{k-1}=S_2^{k-1}$,
$M_1^{k-1}=M_2^{k-1}$. We wish to show that $S_1^k=S_2^k$, $M_1^k=M_2^k$.
The functions $S_{1}^k-S_{2}^k$ and $M_{1}^k-M_{2}^k$
are solutions, respectively, to
\begin{align}
  \frac{\m(K)}{\Delta t}(S_{1,K}^k-S_{2,K}^k) - d_1\sum_{\sigma\in\E_K}\tau_\sigma
	\DD_{K,\sigma}(S_1^k-S_2^k)
	&= \m(K)G_K^k, \label{3.S} \\
	\frac{\m(K)}{\Delta t}(M_{1,K}^k-M_{2,K}^k) - d_2\sum_{\sigma\in\E_K}\tau_\sigma
	\DD_{K,\sigma}(F(M_1^k)-F(M_2^k)) &= \m(K)H_K^k \label{3.M}
\end{align}
for $K\in\T$, where
\begin{align*}
  G_K^k &= -\frac{\kappa_1 S_{1,K}^k}{\kappa_4+S_{1,K}^k}(M_{1,K}^k-M_{2,K}^k)
	- \frac{\kappa_1\kappa_4 M_{2,K}^k}{(\kappa_4+S_{1,K}^k)(\kappa_4+S_{2,K}^k)}
	(S_{1,K}^k-S_{2,K}^k), \\
	H_K^k &= \bigg(\frac{\kappa_3 S_{1,K}^k}{\kappa_4+S_{1,K}^k} - \kappa_2\bigg)
	(M_{1,K}^k-M_{2,K}^k) + \frac{\kappa_3\kappa_4 M_{2,K}^k}{(\kappa_4+S_{1,K}^k)
	(\kappa_4+S_{2,K}^k)}(S_{1,K}^k-S_{2,K}^k).
\end{align*}

Now, let the vectors $(\psi^k_\T,\psi^k_\E)$ and $(\phi^k_\T,\phi^k_\E)$ be the unique
solutions to
\begin{align*}
  -\sum_{\sigma\in\E_K}\tau_\sigma\DD_{K,\sigma}\psi^k 
	&= \m(K)(S_{1,K}^k-S_{2,K}^k), \\
  -\sum_{\sigma\in\E_K}\tau_\sigma\DD_{K,\sigma}\phi^k 
	&= \m(K)(M_{1,K}^k-M_{2,K}^k)
\end{align*}
for $K\in\T$, where we impose the boundary conditions $\psi^k_\sigma=\phi^k_\sigma=0$
for $\sigma\in\E_{\rm ext}$. The existence and uniqueness of these solutions
is a direct consequence of \cite[Lemma 9.2]{EGH00}. We multiply \eqref{3.M} by
$\phi_K^k$ and sum over $K\in\T$:
\begin{align}
  & \frac{1}{\Delta t}\sum_{K\in\T}\m(K)(M_{1,K}^k-M_{2,K}^k)\phi^k_K 
	= I_1 + I_2, \quad\mbox{where} \label{3.eqM} \\
	& I_1 = d_2\sum_{K\in\T}\sum_{\sigma\in\E_K}\tau_\sigma\DD_{K,\sigma}
	(F(M_{1,K}^k)-F(M_{2,K}^k))\phi^k_K, \quad
	I_2 = \sum_{K\in\T}\m(K)H_K^k\phi^k_K. \nonumber
\end{align}
Inserting the equation for $\phi^k$ and using discrete integration by parts gives
$$
  \sum_{K\in\T}\m(K)(M_{1,K}^k-M_{2,K}^k)\phi^k_K
	= -\sum_{K\in\T}\sum_{\sigma\in\E_K}\tau_\sigma\DD_{K,\sigma}(\phi^k)\phi^k_K
	= \sum_{\sigma\in\E}\tau_\sigma(\DD_{K,\sigma}\phi^k)^2 = |\phi^k|_{1,2,\M}^2.
$$
Concerning the sum $I_1$, we use the equation for $\phi^k$ again,
apply discrete integration by parts twice, and take into account the positive 
lower bound for $M_{i}^k$ from Theorem \ref{thm.ex}:
\begin{align*}
  I_1 &= d_2\sum_{K\in\T}(F( M_{1,K}^k)-F(M_{2,K}^k))\sum_{\sigma\in\E_K}
	\tau_\sigma\DD_{K,\sigma}\phi^k \\
	&= -d_2\sum_{K\in\T}\m(K)(F(M_{1,K}^k)-F(M_{2,K} ^k))(M_{1,K}^k-M_{2,K}^k) \\
	&\le -d_2 c_0\sum_{K\in\T}\m(K)(M_{1,K}^{k}-M_{2,K}^k)^2,
\end{align*}
where $c_0>0$ depends on the minimum of $M_1^k$ or $M_2^k$. Finally, because
of the bounds $0\le S_K^k\le 1$ and $0\le M_K^k < 1$ from Theorem \ref{thm.ex}, the
Young inequality and the discrete Poincar\'e inequality \cite[Theorem 3.2]{BCF15},
\begin{align*}
  I_2 &\le -\kappa_2|\phi^k|_{1,2,\M}^2 + \sum_{K\in\T}\m(K)
	\bigg(\frac{\kappa_3}{\kappa_4+1}|M_{1,K}^k-M_{2,K}^k|
	+ \frac{\kappa_3}{\kappa_4}|S_{1,K}^k-S_{2,K}^k|\bigg)|\phi^k_K| \\
	&\le 
	\frac{\delta}{2}\bigg(\frac{\kappa_3^2}{(\kappa_4+1)^2}
	\|M_1^k-M_2^k\|_{0,2,\M}^2 + \frac{\kappa_3^2}{\kappa_4^2}\|S_1^k-S_2^k\|_{0,2,\M}^2 
	\bigg) + \frac{C}{\delta\xi}|\phi^k|_{1,2,\M}^2,
\end{align*}
where $\delta>0$ is arbitrary. 
Collecting these estimates, we infer from \eqref{3.eqM} that
\begin{align*}
  \bigg(\frac{1}{\Delta t} &- \frac{C}{\delta\xi}\bigg)|\phi^k|_{1,2,\M}^2 
	+ \frac12d_2 c_0\|M_1^k-M_2^k\|_{0,2,\M}^2 \\
	&\le \frac{\delta}{2}\bigg(\frac{\kappa_3^2}{(\kappa_4+1)^2}
	\|M_1^k-M_2^k\|_{0,2,\M}^2 
	+ \frac{\kappa_3^2}{\kappa_4^2}\|S_1^k-S_2^k\|_{0,2,\M}^2\bigg).
\end{align*}
Arguing similarly for equation \eqref{3.S}, we arrive to
\begin{align*}
   \bigg(\frac{1}{\Delta t} &- \frac{C}{\delta\xi}\bigg)|\psi^k|_{1,2,\M}^2 
	+ \frac12d_1\|S_1^k-S_2^k\|_{0,2,\M}^2 \\
	&\le \frac{\delta}{2}\bigg(\frac{\kappa_1^2}{(\kappa_4+1)^2}
	\|M_1^k-M_2^k\|_{0,2,\M}^2 
	+ \frac{\kappa_1^2}{\kappa_4^2}\|S_1^k-S_2^k\|_{0,2,\M}^2\bigg).
\end{align*}
We set $R^k:=\|S_1^k-S_2^k\|_{0,2,\M}^2+\|M_1^k-M_2^k\|_{0,2,\M}^2$.
Then an addition of the previous two inequalities yields
$$
  \bigg(\frac{1}{\Delta t} - \frac{C}{\delta\xi}\bigg)\big(|\phi^k|_{1,2,\M}^2 
	+ |\psi^k|_{1,2,\M}^2\big)
	+ \frac12\bigg(\min\{d_1,d_2c_0\} - \delta\frac{\kappa_1^2+\kappa_3^2}{\kappa_4^2}
	\bigg)R^k \le 0.
$$
Choosing $\delta\le \kappa_4^2/(\kappa_1^2+\kappa_3^2)\min\{d_1,d_2c_0\}$ and
$\Delta t < C/(\delta\xi)$, both terms are nonnegative, and we infer that
$\phi^k_K=\psi_K^k=0$ and consequently $M_{1,K}^k-M_{2,K}^k=S_{1,K}^k-S_{2,K}^k=0$
for all $K\in\T$.


\section{Uniform estimates}\label{sec.est}

We establish some estimates that are uniform with respect to $\Delta x$ and $\Delta t$.
The first bounds follow from the results of the previous section.

\begin{lemma}[Uniform estimates I]\label{lem.unif1}
There exists a constant $C>0$ independent of $\Delta x$ and $\Delta t$ such that
\begin{align*}
  & 0\le S_K^k\le 1, \quad 0\le M_K^k < 1\quad\mbox{for }K\in\T, \\
  & \sum_{k=1}^{N_T}\Delta t\big(\|F(M^k)\|_{1,2,\M}^2 + \|S^k\|_{1,2,\M}^2\big) \le C.
\end{align*}
\end{lemma}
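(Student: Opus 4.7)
The proof is essentially a combination of bounds already in hand plus one extra energy estimate for $S^k$. The pointwise bounds $0\le S_K^k\le 1$ and $0\le M_K^k<1$ are nothing new: they are already part of Theorem \ref{thm.ex}, so only the integrated estimates need attention. For the $F(M^k)$ estimate, I would simply iterate the one-step inequality \eqref{2.estimFM}. Summing from $k=1$ to $N_T$ telescopes the terms $\|Z(M^k)\|_{0,1,\M}$ and yields
\[
\sum_{k=1}^{N_T}\Delta t\,C_1\|F(M^k)\|_{1,2,\M}^2 \;\le\; \|Z(M^0)\|_{0,1,\M} + T\,C_2.
\]
The initial term is controlled uniformly in $\Delta x$ by Jensen's inequality applied to the convex function $Z$ (convexity follows from $Z''=f\ge 0$ on $(0,1)$), giving $\|Z(M^0)\|_{0,1,\M}\le \int_\Omega Z(M^0)\,dx$, which is finite under the hypotheses on $M^0$.

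For the uniform bound on $\|S^k\|_{1,2,\M}$ the strategy is to test the scheme \eqref{schS} against $\Delta t\,(S_K^k-1)$ and sum over $K\in\T$. The key point is that $S_\sigma^k=1$ on boundary edges, so the boundary contribution in the discrete integration-by-parts formula \eqref{2.ibp} vanishes and the flux term collapses to $\Delta t\,d_1|S^k|_{1,2,\M}^2$. This is exactly parallel to the test function $F(M^\eps)-F(M^D)$ used in Lemma \ref{lem.grad}. The discrete time derivative is handled by the elementary convexity inequality $(a-b)(a-1)\ge \tfrac12((a-1)^2-(b-1)^2)$, which produces a telescoping $L^2$ term in $S^k-1$. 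The source term $g(S_K^k,M_K^k)(S_K^k-1)$ is pointwise bounded by $\kappa_1$ thanks to the pointwise bounds $0\le S_K^k\le 1$, $0\le M_K^k<1$.

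Summing in $k$, the $L^2$ contribution telescopes, and using the trivial bound $\|S^0-1\|_{0,2,\M}^2\le \m(\Omega)$ I would obtain $\sum_{k=1}^{N_T}\Delta t\,|S^k|_{1,2,\M}^2\le C$. Combined with $\|S^k\|_{0,2,\M}^2\le \m(\Omega)$ from the pointwise bound, this gives the full norm estimate $\sum_{k=1}^{N_T}\Delta t\,\|S^k\|_{1,2,\M}^2\le C$. No genuine obstacle appears: the only delicate choice is the shift by $1$ in the test function for $S^k$, which is precisely what neutralizes the nonhomogeneous Dirichlet datum. Everything else reduces to ingredients already established in Theorem \ref{thm.ex} and Lemmas \ref{lem.S}--\ref{lem.grad}.
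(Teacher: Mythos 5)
Your proposal is correct and follows essentially the same route as the paper: the pointwise bounds and the $F(M^k)$ estimate are imported from Theorem \ref{thm.ex} and Lemma \ref{lem.grad} (you merely make the telescoping of \eqref{2.estimFM} and the Jensen-type control of $\|Z(M^0)\|_{0,1,\M}$ explicit, which the paper leaves implicit), and the $S^k$ bound is obtained exactly as in the paper by testing \eqref{schS} with $\Delta t(S_K^k-1)$, using the convexity inequality for the time difference, bounding the source by a constant, and summing over $k$.
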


\begin{proof}
The $L^\infty$ bounds follow directly from Theorem \ref{thm.ex}, while
the discrete gradient bound for $F(M^k)$ is a consequence of Lemma \ref{lem.grad}.
It remains to show the discrete gradient bound for $S^k$. We multiply \eqref{schS}
by $\Delta t(S_K^k-1)$, sum over $K\in\T$, and use discrete integration by parts:
\begin{align}
  \sum_{K\in\T}&\m(K)(S_K^k-S_K^{k-1})(S_K^k-1) 
	= -\Delta t\sum_{\sigma\in\E}\tau_\sigma\DD_{K,\sigma}(S^k)\DD_{K,\sigma}(S^k-1) 
	\label{2.aux2} \\
	&\phantom{xx}{}+ \Delta t\sum_{K\in\T}\m(K)g(S_K^k,M_K^k)(S_K^k-1) \nonumber \\
  &\le -\Delta t\sum_{\sigma\in\E}\tau_\sigma(\DD_{K,\sigma}(S^k-1))^2
	+ \Delta t\sum_{K\in\T}\m(K)\frac{\kappa_1 S_K^kM_K^k}{\kappa_4+S_K^k}. \nonumber
\end{align}
The left-hand side is bounded from below by
$$
  \sum_{K\in\T}\m(K)\big((S_K^k-1)-(S_K^{k-1}-1)\big)(S_K^k-1)
	\ge \frac12\sum_{K\in\T}\m(K)\big((S_K^k-1)^2 - (S_K^{k-1}-1)^2\big).
$$
In view of the upper bounds for $S_K^k$ and $M_K^k$, the last term on the right-hand
side of \eqref{2.aux2} is bounded by $\Delta t\m(\Omega)\kappa_1/(\kappa_4+1)$.
Therefore, it follows from \eqref{2.aux2} that
$$
  \frac12\sum_{K\in\T}\m(K)(S_K^k-1)^2 
	+ \Delta t|S_K^k-1|_{1,2,\M}^2
	\le \frac12\sum_{K\in\T}\m(K)(S_K^{k-1}-1)^2 + C\Delta t.
$$
Summing this inequality from $k=1,\ldots,N_T$, we find that
$$
  \frac12\|S^{N_T}-1\|_{0,2,\M}^2 + \sum_{k=1}^{N_T}\Delta t|S_K^k-1|_{1,2,\M}^2
	\le \frac12\|S^0-1\|_{0,2,\M}^2 + CT.
$$
This yields the desired estimate.
\end{proof}

We also need an estimate for the time translates of the solution. For this,
let $\phi\in C_0^\infty(\Omega_T)$ be given and define 
$\phi^k=(\phi^k_\T,\phi^k_\E)\in\R^\theta$ (recall that $\theta=\#\T+\#\E$)
for $k=1,\ldots,N_T$ by
$$
  \phi_K^k = \frac{1}{\m(K)}\int_K\phi(x,t_k)dx, \quad
	\phi_\sigma^k = \frac{1}{\m(\sigma)}\int_\sigma\phi(s,t_k)ds =0,
$$
where $K\in\T$ and $\sigma\in\E_{\rm ext}$. 

\begin{lemma}[Uniform estimates II]\label{lem.unif2}
For any $\phi\in C_0^\infty(\Omega_T)$,
there exist constants $C_3$, $C_4>0$, only depending on the data and the mesh, 
such that
\begin{align*}
  \sum_{k=1}^{N_T}\sum_{K\in\T}\m(K)(M_K^k-M_K^{k-1})\phi_K^k
	&\le C_3\|\na\phi\|_{L^\infty(\Omega_T)}, \\
	\sum_{k=1}^{N_T}\sum_{K\in\T}\m(K)(S_K^k-S_K^{k-1})\phi_K^k
	&\le C_4\|\na\phi\|_{L^\infty(\Omega_T)}.
\end{align*}
\end{lemma}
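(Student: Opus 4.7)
\textbf{Proof plan for Lemma \ref{lem.unif2}.} The strategy is the standard one for controlling time translates of a finite-volume solution: use the discrete equations \eqref{schS}--\eqref{schM} themselves to rewrite the discrete time derivatives $\m(K)(M_K^k-M_K^{k-1})$ and $\m(K)(S_K^k-S_K^{k-1})$, multiply by $\phi_K^k$, sum over cells, apply the discrete integration-by-parts formula \eqref{2.ibp}, and then invoke the uniform bounds already established in Lemma \ref{lem.unif1}. I will carry out the argument in detail for $M$; the argument for $S$ is fully analogous after replacing $F(M^k)$ by $S^k$ and $h$ by $g$.

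From \eqref{schM}, after multiplying by $\Delta t\,\phi_K^k$ and summing over $K\in\T$, I get
\[
\sum_{K\in\T}\m(K)(M_K^k-M_K^{k-1})\phi_K^k
= -\Delta t\sum_{K\in\T}\sum_{\sigma\in\E_K}\F_{M,K,\sigma}^k\phi_K^k
+ \Delta t\sum_{K\in\T}\m(K)h(S_K^k,M_K^k)\phi_K^k.
\]
Since $\phi\in C_0^\infty(\Omega_T)$, the boundary values $\phi_\sigma^k$ vanish for all $\sigma\in\E_{\rm ext}$, so \eqref{2.ibp} applied to the first term on the right-hand side produces
\[
-\Delta t\sum_{K\in\T}\sum_{\sigma\in\E_K}\F_{M,K,\sigma}^k\phi_K^k
= -\Delta t\, d_2\sum_{\sigma\in\E}\tau_\sigma \DD_{K,\sigma}F(M^k)\,\DD_{K,\sigma}\phi^k.
\]
Summing over $k=1,\ldots,N_T$ and using Cauchy--Schwarz in both the $\sigma$-sum and the $k$-sum,
\[
\bigg|\sum_{k=1}^{N_T}\Delta t\sum_\sigma \tau_\sigma \DD_{K,\sigma}F(M^k)\DD_{K,\sigma}\phi^k\bigg|
\le \bigg(\sum_k\Delta t |F(M^k)|_{1,2,\M}^2\bigg)^{1/2}\bigg(\sum_k\Delta t |\phi^k|_{1,2,\M}^2\bigg)^{1/2},
\]
and the first factor is bounded by Lemma \ref{lem.unif1}.

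The key remaining point is to bound $|\phi^k|_{1,2,\M}$ by $\|\na\phi\|_{L^\infty(\Omega_T)}$. Because $\phi_K^k$ and $\phi_{K,\sigma}^k$ are averages of $\phi(\cdot,t_k)$ over $K$ and over the neighbouring cell or boundary edge, a standard mean-value estimate combined with the admissibility of the mesh gives $|\DD_{K,\sigma}\phi^k|\le C\,\dist_\sigma\|\na\phi\|_{L^\infty(\Omega_T)}$ for some $C>0$ depending on the regularity parameter $\xi$ in \eqref{2.regmesh}. Therefore
\[
|\phi^k|_{1,2,\M}^2 = \sum_\sigma \tau_\sigma(\DD_{K,\sigma}\phi^k)^2
\le C^2\|\na\phi\|_{L^\infty(\Omega_T)}^2 \sum_\sigma \m(\sigma)\dist_\sigma
\le 2C^2\m(\Omega)\|\na\phi\|_{L^\infty(\Omega_T)}^2,
\]
by the mesh inequality \eqref{2.estmesh}. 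This controls the diffusion contribution by $C_3'\|\na\phi\|_{L^\infty(\Omega_T)}$.

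For the source term, the pointwise bounds $0\le S_K^k\le 1$, $0\le M_K^k<1$ from Lemma \ref{lem.unif1} yield $|h(S_K^k,M_K^k)|\le \kappa_3/(\kappa_4+1)+\kappa_2$, so
\[
\sum_{k=1}^{N_T}\Delta t\sum_{K\in\T}\m(K)|h(S_K^k,M_K^k)||\phi_K^k|
\le C\,T\,\m(\Omega)\|\phi\|_{L^\infty(\Omega_T)}.
\]
Because $\phi(\cdot,t)$ vanishes on $\partial\Omega$, a Poincar\'e-type inequality gives $\|\phi\|_{L^\infty(\Omega_T)}\le \operatorname{diam}(\Omega)\|\na\phi\|_{L^\infty(\Omega_T)}$, closing the bound for $M$. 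The argument for $S$ is identical, using $|S^k|_{1,2,\M}$ from Lemma \ref{lem.unif1} and the bound $|g(S_K^k,M_K^k)|\le \kappa_1$. I expect no serious obstacle here; the only point requiring some care is the mean-value estimate on $\DD_{K,\sigma}\phi^k$ at edges where the two cell centres are not collinear with the midpoint of $\sigma$, which is why the admissibility assumption \eqref{2.regmesh} is used.
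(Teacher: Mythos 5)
Your proposal follows essentially the same route as the paper: multiply the scheme by $\Delta t\,\phi_K^k$, sum, apply the discrete integration-by-parts formula \eqref{2.ibp}, and control the flux term via Cauchy--Schwarz, the gradient bounds of Lemma \ref{lem.unif1}, the estimate $|\DD_{K,\sigma}\phi^k|\le C\dist_\sigma\|\na\phi\|_{L^\infty(\Omega_T)}$, and the mesh inequality \eqref{2.estmesh}. The only (harmless) deviation is in the source term, where you invoke the pointwise bound $\|\phi\|_{L^\infty(\Omega_T)}\le\operatorname{diam}(\Omega)\|\na\phi\|_{L^\infty(\Omega_T)}$ instead of the paper's discrete Poincar\'e inequality; both give the same conclusion.
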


\begin{proof}
We multiply \eqref{schM} by $\Delta t\phi_K^k$, sum over $K\in\T$ and
$k=1,\ldots,N_T$, and use discrete integration by parts. Then
\begin{align}
  & \sum_{k=1}^{N_T}\sum_{K\in\T}\m(K)(M_K^k-M_K^{k-1})\phi_K^k = I_3+I_4, \quad
	\mbox{where} \label{6.aux} \\
	& I_3 = -d_2\sum_{k=1}^{N_T}\Delta t\sum_{\sigma\in\E}\tau_\sigma
	\DD_{K,\sigma}F(M^k)\DD_{K,\sigma}\phi^k, \nonumber \\
	& I_4 = \sum_{k=1}^{N_T}\Delta t\sum_{K\in\T}\m(K)
	\bigg(\frac{\kappa_3 S_K^k}{\kappa_4+S_K^k}-\kappa_2\bigg)M_K^k\phi_K^k.
	\nonumber
\end{align}
It follows from the Cauchy--Schwarz inequality, Lemma \ref{lem.unif1}, and the mesh
regularity \eqref{2.regmesh} that
\begin{align*}
  |I_3| &\le d_2 C\|\na\phi\|_{L^\infty(\Omega_T)}\bigg(\sum_{k=1}^{N_T}\Delta t
	\sum_{K\in\T}\sum_{\sigma\in\E_K}\m(\sigma)\dist_\sigma\bigg)^{1/2} \\
	&\le d_2 C\xi^{-1/2}\|\na\phi\|_{L^\infty(\Omega_T)}\bigg(\sum_{k=1}^{N_T}\Delta t
	\sum_{K\in\T}\sum_{\sigma\in\E_K}\m(\sigma)\dist(x_K,\sigma)\bigg)^{1/2} \\
  &= d_2C\sqrt{2\m(\Omega)T\xi^{-1}}\|\na\phi\|_{L^\infty(\Omega_T)},
\end{align*}
where we used \eqref{2.estmesh} in the last step. Next, using similar 
arguments and the discrete Poincar\'e inequality \cite[Theorem 3.2]{BCF15},
\begin{align*}
  |I_4| &\le \bigg(\kappa_2+\frac{\kappa_3}{\kappa_4+1}\bigg)\sqrt{T\m(\Omega)}
	\bigg(\sum_{k=1}^{N_T}\Delta t\|\phi^k\|_{0,2,\M}^2\bigg)^{1/2} \\
	&\le \bigg(\kappa_2+\frac{\kappa_3}{\kappa_4+1}\bigg)\sqrt{T\m(\Omega)C\xi^{-1}}
	\bigg(\sum_{k=1}^{N_T}\Delta t|\phi^k|_{1,2,\M}^2\bigg)^{1/2} \\
	&\le \bigg(\kappa_2+\frac{\kappa_3}{\kappa_4+1}\bigg)\sqrt{T\m(\Omega)C\xi^{-1}}
	\|\na\phi\|_{L^\infty(\Omega_T)}\bigg(\sum_{k=1}^{N_T}\Delta t\sum_{K\in\T}
	\sum_{\sigma\in\E_K}\m(\sigma)\dist_\sigma\bigg)^{1/2} \\
	&\le C(T,\Omega,\xi)\xi^{-1}\bigg(\kappa_2+\frac{\kappa_3}{\kappa_4+1}\bigg)
	\|\na\phi\|_{L^\infty(\Omega_T)}. 
\end{align*}
Inserting these estimates into \eqref{6.aux} shows the first statement of the
lemma. The second statement is proved in a similar way.
\end{proof}


\section{Convergence of the scheme}\label{sec.conv}

The compactness follows from the uniform estimates proved in the previous
section and the discrete compensated compactness result 
obtained in \cite[Theorem 3.9]{ACM17}. 

\begin{lemma}[Compactness]\label{lem.comp}
Let $(S_m,M_m)_{m\in\N}$ be a sequence of solutions to scheme 
\eqref{sch.ic}--\eqref{Flux} constructed in Theorem \ref{thm.ex}.
Then there exists $(S,M)\in L^\infty(\Omega_T;\R^2)$ satisfying $F(M)$, $S\in
L^2(0,T;H^1(\Omega))$ such that, up to a subsequence, as $m\to\infty$,
\begin{align*}
  M_m\to M, \quad S_m\to S &\quad\mbox{a.e. in }\Omega_T, \\
  F(M_m)\to F(M) &\quad\mbox{strongly in }L^r(\Omega_T)\mbox{ for }
	1 \le r < 2, \\
	\na^m F(M_m)\rightharpoonup \na F(M), \quad \na^m S_m\rightharpoonup\na S
	&\quad\mbox{weakly in }L^2(\Omega_T).
\end{align*}
\end{lemma}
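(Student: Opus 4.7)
The plan is to combine the uniform bounds of Lemmas \ref{lem.unif1} and \ref{lem.unif2} with the discrete compensated compactness theorem of \cite{ACM17}, and then to identify the limits of the discrete gradients via standard tools for admissible TPFA schemes.

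First, I would use the $L^\infty$ bounds $0\le S_m\le 1$ and $0\le M_m<1$ from Theorem \ref{thm.ex} to extract, along a subsequence, weak-$*$ limits $S,M\in L^\infty(\Omega_T)$ with $0\le S\le 1$ and $0\le M\le 1$ a.e. The uniform discrete $L^2(0,T;H^1)$ bounds for $F(M^k)$ and $S^k$ from Lemma \ref{lem.unif1} give, along a further subsequence, weak $L^2(\Omega_T)$ limits of the approximate gradients $\na^m F(M_m)$ and $\na^m S_m$.

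The core step is to apply the discrete compensated compactness result \cite[Theorem 3.9]{ACM17} to the sequences $F(M_m)$ and $S_m$. Its two hypotheses---a uniform bound on the discrete $L^2(0,T;H^1)$ norm and a dual time-translate estimate against smooth test functions---are exactly what Lemmas \ref{lem.unif1} and \ref{lem.unif2} provide (the second applied to $F(M_m)-F(M^D)$, using the chain $M_K^k-M_K^{k-1}$ that appears in the discrete scheme). This yields a.e.\ convergence of both $F(M_m)$ and $S_m$ on $\Omega_T$. Since $F:[0,1)\to[0,\infty)$ is a continuous strictly increasing bijection with continuous inverse, the a.e.\ limit of $F(M_m)$ uniquely determines the a.e.\ limit $M$ of $M_m$, and the a.e.\ finiteness of the limit $F(M)$ forces $M<1$ a.e. The strong $L^r(\Omega_T)$ convergence of $F(M_m)$ for $1\le r<2$ then follows by Vitali's theorem, the required equi-integrability being inherited from a uniform $L^2(\Omega_T)$ bound on $F(M_m)$ obtained from the discrete two-dimensional Sobolev embedding \cite{BCF15}.

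To identify the weak $L^2(\Omega_T)$ limits of $\na^m F(M_m)$ and $\na^m S_m$ with the distributional gradients $\na F(M)$ and $\na S$, and hence to conclude that $F(M),S\in L^2(0,T;H^1(\Omega))$, I would invoke the standard identification lemma for admissible TPFA schemes: strong $L^1$ convergence of the piecewise constant reconstructions together with $L^2$ boundedness of the approximate gradients forces the weak limit of the latter to coincide with the distributional gradient of the limit, see \cite{CLP03} or the analogous statement in \cite{EGH00}. The main obstacle is Step 2: one must verify that the form of the time-regularity estimate in Lemma \ref{lem.unif2}, which controls the sum $\sum_k\sum_K\m(K)(v_K^k-v_K^{k-1})\phi_K^k$ tested against smooth $\phi$ rather than giving a direct $L^2$ time-translate estimate, matches the hypotheses of \cite[Theorem 3.9]{ACM17}, and that the nonlinearity $M\mapsto F(M)$ (which is unbounded as $M\to 1^-$) does not interfere with the application of the theorem on the subset where $M$ is close to the singular value.
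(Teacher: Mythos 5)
Your proposal follows essentially the same route as the paper: the nonlinear discrete compactness theorem of \cite{ACM17} applied to the pair $(M_m,F(M_m))$ (and to $S_m$), Vitali's theorem for the strong $L^r$ convergence of $F(M_m)$, and the identification of the weak limits of the approximate gradients via the argument of \cite{CLP03}. The obstacle you flag at the end is not actually an issue: conditions (a)--(c) of \cite[Prop.~3.8]{ACM17} ask for the dual time-translate estimate on $M_m$ itself (which is exactly what Lemma \ref{lem.unif2} provides, tested against smooth $\phi$) together with the discrete $L^2(0,T;H^1)$ bound on the nonlinear quantity $F(M_m)$ from Lemma \ref{lem.unif1}; this mismatch between the variable carrying the time regularity and the one carrying the space regularity is precisely what the ``compensated'' compactness theorem is designed for, so no time estimate on $F(M_m)$ is required and the singularity of $F$ near $M=1$ does not interfere.
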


\begin{proof}
The a.e.\ convergence for $M_m$ is a consequence of \cite[Theorem 3.9]{ACM17}. Indeed,
the estimates in Lemmas \ref{lem.unif1}--\ref{lem.unif2} correspond to
conditions (a)--(c) in \cite[Prop.~3.8]{ACM17}, while assumptions
(A$_{\rm t}$1), (A$_{\rm x}$1)--(A$_{\rm x}$3) are satisfied 
for our implicit Euler finite-volume scheme.
We infer that there exists a subsequence which is not
relabeled such that $M_m\to M$ and $F(M_m)\to F(M)$ a.e.\ in $\Omega_T$. 
In view of Lemma \ref{lem.unif1}, the sequence $(F(M_m))$ is bounded in 
$L^2(\Omega_T)$, and thanks to the Vitali's lemma, we conclude that 
$F(M_m)\to F(M)$ strongly in $L^r(\Omega_T)$ for all $1 \le r < 2$.

As a consequence of the gradient estimate in Lemma \ref{lem.grad},
there exists a subsequence of $(\na^m F(M_m))$ (not relabeled) such that
$\na^m F(M_m)\rightharpoonup \Psi$ weakly in $L^2(\Omega_T)$ as $m\to\infty$.
The limit $\Psi$ can be identified with $F(M)$ by following the arguments
in the proof of \cite[Lemma 4.4]{CLP03}. Indeed, the idea is to prove that 
for all $\phi\in C_0^\infty(\Omega_T;\R^2)$,
$$
  A_m := \int_0^T\int_\Omega\na^m F(M_m)\cdot\phi dxdt
	+ \int_0^T\int_\Omega F(M_m)\diver\phi dxdt\to 0
$$
as $m\to\infty$. This is done by reformulating the two integrals: 
\begin{align*}
  \int_\Omega\na^m F(M_m)\cdot\phi dx
	&= -\frac12\sum_{K\in\T}\sum_{\sigma\in\E_{{\rm int},K}}
	\frac{\m(\sigma)}{\m(T_{K,\sigma})}\DD_{K,\sigma}F(M_{m})
	\int_{T_{K,\sigma}}\phi(s,t)\cdot\nu_{K,\sigma}dx, \\
	\int_\Omega F(M_m)\diver\phi dx &= \frac12\sum_{K\in\T}
	\sum_{\sigma\in\E_{{\rm int},K}}\DD_{K,\sigma}F(M_m)\int_\sigma\phi(s,t)
	\cdot\nu_{K,\sigma}ds.
\end{align*}
Because of the property (see \cite[Lemma 4.4]{CLP03})
$$
  \bigg|\frac{1}{\m(T_{K,\sigma})}\int_{T_{K,\sigma}}
	\phi(t,s)\cdot\nu_{K,\sigma}dx - \frac{1}{\m(\sigma)}\int_\sigma\phi(s,t)
	\cdot\nu_{K,\sigma}ds\bigg|
	\le \eta_m\|\phi\|_{C^1(\overline{\Omega})}
$$
and the uniform estimates for $F(M_{m})$ from Lemma \ref{lem.unif1}, 
it follows that 
\begin{align*}
  |A_m| &\le \frac12\sum_{k=1}^{N_T}\Delta t_m\sum_{K\in\T}
	\sum_{\sigma\in\E_{{\rm int},K}}\m(\sigma)\DD_{K,\sigma}F(M^k) \\
	&\phantom{xx}{}\times\bigg|\frac{1}{\m(T_{K,\sigma})}\int_{T_{K,\sigma}}
	\phi(t,s)\cdot\nu_{K,\sigma}dx - \frac{1}{\m(\sigma)}\int_\sigma\phi(s,t)
	\cdot\nu_{K,\sigma}ds\bigg| \\
  &\le \eta_m C\|\phi\|_{C^1(\overline{\Omega})}\to 0\quad\mbox{as }m\to\infty.
\end{align*}
This implies that $\Psi=\na F(M)$.
Finally, similar arguments as above show the convergence results for $S_m$
and $\na^m S_m$.
\end{proof}

\begin{lemma}[Convergence of the traces]\label{lem.trace}
Let $(S_m,M_m)_{m\in\N}$ be a sequence of solutions to scheme 
\eqref{sch.ic}--\eqref{Flux} constructed in Theorem \ref{thm.ex}.
Then the limit function $(S,M)$ obtained in Lemma \ref{lem.comp} satisfies
$$
  S-1, \quad F(M)-F(M^D)\in L^2(0,T;H_0^1(\Omega)).
$$
\end{lemma}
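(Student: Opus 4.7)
The plan is to adapt the classical argument used in the finite-volume literature for Dirichlet problems (see, e.g., \cite[Lemma 4.4]{CLP03} or \cite[Chapter 14]{EGH00}) to identify the trace of the limit $(S,M)$ on $\partial\Omega$. The key point is that the discrete $H^1$ seminorm $|\cdot|_{1,2,\M}$ introduced in \eqref{2.norm} sums over \emph{all} edges, including the boundary edges $\E_{\rm ext}$, where the jumps $\DD_\sigma$ are computed using the prescribed Dirichlet values $S_\sigma=1$ and $M_\sigma=M^D$. Hence, by construction, the discrete functions $v_m^S := S_m-1$ and $v_m^F := F(M_m)-F(M^D)$ take the value $0$ on every boundary edge, and the uniform estimates of Lemma \ref{lem.unif1}, rewritten for $v_m^S$ and $v_m^F$, give bounds
\begin{equation*}
\sum_{k=1}^{N_T}\Delta t_m\,|v_m^S(\cdot,t_k)|_{1,2,\M_m}^2 + \sum_{k=1}^{N_T}\Delta t_m\,|v_m^F(\cdot,t_k)|_{1,2,\M_m}^2 \le C,
\end{equation*}
uniformly in $m$.

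First, I would extend $v_m^S$ and $v_m^F$ by zero outside $\Omega$ to obtain functions $\widetilde v_m^S$ and $\widetilde v_m^F$ defined on a bounded open set $\widetilde\Omega\supset\overline{\Omega}$. Correspondingly, I would enlarge the mesh by attaching a thin layer of cells (one per boundary edge) whose centres lie just outside $\Omega$ on the outward normal to $\sigma$, at distance $\dist(x_K,\sigma)$ from $\sigma$; on these fictitious cells the extended functions vanish. With this construction, the discrete gradient $\nabla^m\widetilde v_m^S$ on the enlarged mesh coincides, up to reflections across $\partial\Omega$, with the discrete gradient $\nabla^m v_m^S$ inside $\Omega$, and the additional contributions across $\partial\Omega$ are exactly the boundary-edge jumps already encoded in $|v_m^S|_{1,2,\M_m}$ (and similarly for $v_m^F$).

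Next, I would apply the identification argument already used in the proof of Lemma \ref{lem.comp}: for any $\phi\in C_0^\infty(\widetilde\Omega\times(0,T);\R^2)$, the quantity
\begin{equation*}
\int_0^T\!\!\int_{\widetilde\Omega}\nabla^m\widetilde v_m^F\cdot\phi\,dx\,dt + \int_0^T\!\!\int_{\widetilde\Omega}\widetilde v_m^F\,\diver\phi\,dx\,dt
\end{equation*}
tends to zero as $m\to\infty$ thanks to the uniform gradient bound and the elementary estimate recalled in Lemma \ref{lem.comp}. This shows that, up to a subsequence, $\nabla^m\widetilde v_m^F\rightharpoonup \nabla\widetilde v^F$ weakly in $L^2(\widetilde\Omega\times(0,T))$, where $\widetilde v^F$ is the zero extension of $F(M)-F(M^D)$ (using the a.e.\ convergence from Lemma \ref{lem.comp}). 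Because $\widetilde v^F\in L^2(0,T;H^1(\widetilde\Omega))$ and vanishes on $\widetilde\Omega\setminus\overline{\Omega}$, a standard density/trace argument forces the trace of $F(M)-F(M^D)$ on $\partial\Omega$ to vanish, i.e.\ $F(M)-F(M^D)\in L^2(0,T;H_0^1(\Omega))$. The argument for $S-1$ is entirely analogous.

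The main obstacle is the careful construction of the fictitious outer layer so that the extended discrete gradient genuinely controls the distributional gradient of the zero extension on $\widetilde\Omega$; this requires using the mesh regularity \eqref{2.regmesh} to ensure that the outer half-diamonds have comparable measure to the inner ones, so that \eqref{2.para} and the estimate \eqref{2.estmesh} can be applied on the enlarged mesh exactly as in the interior. Once this bookkeeping is in place, the weak-convergence/identification step is a direct transcription of the argument already carried out in Lemma \ref{lem.comp}.
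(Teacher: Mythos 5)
Your argument is correct, but it takes a genuinely different route from the paper. You prove membership in $L^2(0,T;H_0^1(\Omega))$ by the classical extension-by-zero device (in the spirit of the Dirichlet analysis in \cite{EGH00}): since the discrete seminorm $|\cdot|_{1,2,\M}$ already sums over the boundary edges with the Dirichlet values $S_\sigma=1$, $M_\sigma=M^D$, the zero extensions to an enlarged domain carry a uniform discrete $H^1$ bound, the weak limit of the extended discrete gradients is identified with the distributional gradient of the zero-extended limit by the same duality computation as in Lemma \ref{lem.comp}, and the characterization of $H_0^1$ on a Lipschitz (polygonal) domain via zero extension concludes. The paper instead proves convergence of the boundary traces directly, following \cite[Props.~4.9 and 4.11]{BCH13}: it introduces the auxiliary trace $\widetilde M_m$, controls $F(M_m)-F(\widetilde M_m)$ on $\pa\Omega$ by the same boundary-edge part of the discrete seminorm you exploit, and then estimates a three-term decomposition over thin cylinders attached to each face of $\pa\Omega$ using a discrete trace inequality, the strong $L^r$ convergence of $F(M_m)$, and the continuity of the trace of $F(M)\in L^2(0,T;H^1(\Omega))$. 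Your route avoids all boundary integrals and the face-by-face cylinder construction, at the price of the bookkeeping for the fictitious outer layer (placement of the reflected centres so that orthogonality, \eqref{2.para}, and \eqref{2.estmesh} survive on the enlarged mesh), which you correctly identify as the only delicate point; the paper's route yields the slightly stronger statement that the discrete traces themselves converge weakly on $\pa\Omega\times(0,T)$, but for the lemma as stated both arguments deliver the same conclusion.
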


\begin{proof}
The proof for $S$ is a direct consequence of \cite[Prop.~4.9]{BCH13}. For $F(M)$, 
we follow the proof of \cite[Prop.~4.11]{BCH13}. In particular, we aim to prove that 
\begin{equation}\label{6.trace1}
  \int_0^T \int_{\pa\Omega} (F(M_m) - F(M))\psi dxdt \to 0 \quad 
	\mbox{as }m \to \infty
\end{equation}
for every $\psi \in C^\infty_0(\pa\Omega\times(0,T))$.
If this result holds then, as $M_m = M^D$ on $\pa\Omega\times(0,T)$, 
we obtain 
\begin{align*}
  \int_0^T \int_{\pa\Omega} (F(M)-F(M^D)) \psi dx dt 
	&= \lim_{m \to \infty} \bigg( \int_0^T \int_{\pa\Omega} (F(M)-F(M_m)) \psi dxdt \\
  &\phantom{xx}{}+ \int_0^T \int_{\pa\Omega} (F(M_m)-F(M^D))\psi dxdt\bigg) = 0,
\end{align*}
which implies that $F(M)=F(M^D)$ a.e.\ on $\pa\Omega\times(0,T)$. 

To prove \eqref{6.trace1}, we choose a fixed $m\in\N$ and
introduce another definition of the trace of $M_m$, 
denoted by $\widetilde{M}_{m}$, such that $\widetilde{M}_m(x,t)= M^k_K$ if 
$(x,t) \in \sigma \times (t_{k-1},t_k]$ with $\sigma \in \E_{\rm{ext},K}$. 
Following \cite{BCH13}, we notice that the property \eqref{6.trace1} is equivalent to
\begin{equation}\label{6.trace2}
  \int_0^T \int_{\pa\Omega} (F(\widetilde{M}_m) - F(M)) \psi dxdt \to 0 
	\quad \mbox{as }m \to \infty
\end{equation}
for all $\psi \in C^\infty_0(\pa\Omega\times(0,T))$. Indeed, we have, by the
Cauchy--Schwarz inequality,
\begin{align*}
  \int_0^T \int_{\pa\Omega} |F(M_m)-F(\widetilde{M}_m)| dx dt 
	&= \sum_{k=1}^{N_T} \Delta t_m \sum_{K \in \T} \sum_{\sigma \in \E_{\mathrm{ext},K}} 
	\m(\sigma) |F(M^D)-F(M^k_K)| \\
  &\le \bigg(\sum_{k=1}^{N_T}\Delta t_m\sum_{K\in\T}\sum_{\sigma\in\E_{\mathrm{ext},K}} 
	\tau_\sigma |F(M^D)-F(M^k_K)|^2\bigg)^{1/2} \\
  &\phantom{xx}{}\times\bigg(\sum_{k=1}^{N_T} \Delta t_m \sum_{K\in\T}
	\sum_{\sigma\in\E_{\mathrm{ext},K}}\m(\sigma)\dist_\sigma \bigg)^{1/2}.
\end{align*}
Hence, thanks to Lemma \ref{lem.unif1} and the fact that $\dist_\sigma 
= \dist(x_K,\sigma) \le\mathrm{diam}(K) \le\eta_m$ for every 
$\sigma \in \E_{\mathrm{ext},K}$, it follows that
$$
  \int_0^T \int_{\pa\Omega}|F(M_m)-F(\widetilde{M}_m)| dx dt 
	\le C (T \m(\partial \Omega) \eta_m)^{1/2} \to 0 \quad \mbox{as }m \to \infty,
$$
which proves the claim.

Now, as $\Omega$ is assumed to be a polygonal domain, $\pa\Omega$ consists of a 
finite number of faces denoted by $(\Gamma_i)_{1 \leq i \leq I}$. 
Similarly to \cite{BCH13,EGHM02}, we define for $\eps>0$ the subset $\Omega_{i,\eps}$ 
of $\Omega$ such that every $x \in \Omega_{i,\eps}$ satisfies 
$\dist(x,\Gamma_i)< \eps$ and $\dist(x,\Gamma_i) < \dist(x,\Gamma_j)$ for all 
$j \neq i$. We also define the subset $\omega_{i,\eps} \subset \Omega_{i,\eps}$ 
as the largest cylinder of width $\eps$ generated by $\Gamma_i$. Let $\nu_i$ be 
the unit vector that is normal to $\Gamma_i$, i.e., more precisely, we introduce 
the set
$$
  \omega_{i,\eps} := \big\{ x - h \nu_i \in \Omega_i : x \in \Gamma_i, \ 
	0 < h < \eps \mbox{ and } [x,x-h \nu_i] \subset \overline{\Omega}_{i,\eps} 
	\big\} \quad \mbox{for all }1 \leq i \leq I. 
$$
Finally, we also introduce the subset $\Gamma_{i,\eps} := \pa\omega_{i,\eps} 
\cap \Gamma_i$, which fulfills $\m(\Gamma_i \setminus \Gamma_{i,\eps}) 
\leq C \eps$ for some constant $C>0$ only depending on $\Omega$.

Let $i \in \{1,\ldots,I\}$ be fixed and let $\psi \in C^\infty_0(\Gamma_i\times(0,T))$.
Then there exists $\eps^*=\eps^*(\psi) > 0$ such that for every 
$\eps \in (0,\eps^*)$, we have $\operatorname{supp}(\psi) \subset\Gamma_{i,\eps}
\times(0,T)$. We write
\begin{align*}
  &\int_0^T \int_{\Gamma_i} (F(\widetilde{M}_m)-F(M)) \psi dxdt 
	= B_{1,m,\eps} + B_{2,m,\eps} + B_{3,\eps}, \quad\mbox{where} \\
  & B_{1,m,\eps} = \int_0^T \frac{1}{\eps} \int_{\Gamma_{i,\eps}} \int_0^\eps 
	\big(F(\widetilde{M}_m(x,t)) - F(M_m(x-h \nu_i,t)) \big) \psi(x,t) dh dx dt, \\
  & B_{2,m,\eps} = \int_0^T \frac{1}{\eps} \int_{\Gamma_{i,\eps}} \int_0^\eps 
	\big( F(M_m(x-h \nu_i,t)) - F(M(x-h \nu_i,t)) \big) \psi(x,t) dh dx dt, \\
  & B_{3,\eps} = \int_0^T \frac{1}{\eps} \int_{\Gamma_{i,\eps}} \int_0^\eps 
	\big( F(M(x-h \nu_i,t)) - F(M)\big) \psi(x,t) dh dx dt.
\end{align*}
We apply the Cauchy--Schwarz inequality to the first term and then use
\cite[Lemma~4.8]{BCH13} and Lemma \ref{lem.unif1} to find that
\begin{align*}
  |B_{1,m,\eps}| &\le \bigg(\int_0^T \frac{1}{\eps} \int_{\Gamma_{i,\eps}} \int_0^\eps 
	\big(F(\widetilde{M}_m(x,t)) - F(M_m(x-h \nu_i,t))\big)^2 dh dx dt\bigg)^{1/2} \\
	&\phantom{xx}{}\times\bigg(\int_0^T \int_{\Gamma_i} \psi(x,t)^2 dxdt\bigg)^{1/2}
  \le \sqrt{\eps + \eta_m}\|F(M_m)\|_{1,2,\M}\|\psi\|_{L^2(\Gamma_i\times(0,T))}.
\end{align*}
Taking into account that Lemma \ref{lem.comp} implies that $F(M_m)\to F(M)$ 
strongly in $L^r(\Omega_T)$ for $1 \le r < 2$, we infer that the second term 
$B_{2,m,\eps}$ converges to zero as $m\to\infty$. This shows that 
$$
  \lim_{m \to \infty} \bigg| \int_0^T \int_{\Gamma_i} (F(\widetilde{M}_m)-F(M)) 
	\psi dxdt \bigg| \le C\sqrt{\eps} + |B_{3,\eps}|.
$$
Since $F(M) \in L^2(0,T;H^1(\Omega))$, the function $F(M)$ has a trace in 
$L^2(\pa\Omega\times(0,T))$ such that $B_{3,\eps} \to 0$ as $\eps \to 0$. Hence,
performing the limit $\eps\to 0$, we conclude that \eqref{6.trace2} holds,
finishing the proof.
\end{proof}

It remains to verify that
the limit function $(S,M)$ obtained in Lemma \ref{lem.comp}
is a weak solution to \eqref{1.eqS}--\eqref{1.f}. We follow the ideas of \cite{CLP03}
and prove that $M$ solves \eqref{weakM}, as the proof of \eqref{weakS} is
analogous. Let $\phi\in C_0^\infty(\Omega\times[0,T))$ and let
$\eta_m=\max\{\Delta x_m,\Delta t_m\}$ be sufficiently small such that
$\operatorname{supp}(\phi)\subset\{x\in\Omega:\dist(x,\pa\Omega)>\eta_m\}\times
(0,T)$. The aim is to prove that $F_{10}^m+F_{20}^m+F_{30}^m\to 0$ as
$m\to\infty$, where
\begin{align*}
  F_{10}^m &= -\int_0^T\int_\Omega M_m\pa_t\phi dxdt
	- \int_\Omega M_m(x,0)\phi(x,0)dx, \\
	F_{20}^m &= d_2\int_0^T\int_\Omega\na^m F(M_m)\cdot\na\phi dxdt, \\
	F_{30}^m &= -\int_0^T\int_\Omega h(S_m,M_m)\phi dxdt.
\end{align*}
The convergence results from Lemma \ref{lem.comp} allow us to perform the limit
$m\to\infty$ in these integrals, leading to
\begin{align*}
  F_{10}^m+F_{20}^m+F_{30}^m &\to -\int_0^T\int_\Omega M\pa_t\phi dxdt
	- \int_\Omega M^0(x)\phi(x,0)dx \\
	&\phantom{xx}{}+ d_2\int_0^T\int_\Omega\na F(M)\cdot\na\phi dxdt
	- \int_0^T\int_\Omega h(S,M)\phi dxdt.
\end{align*}

Now we set $\phi_K^k=\phi(x_K,t_k)$, multiply \eqref{schM} by $\Delta t\phi_K^{k-1}$,
and sum over $K\in\T$ and $k=1,\ldots,N_T$:
\begin{align}
  & F_1^m+F_2^m+F_3^m = 0, \quad\mbox{where} \label{6.weak} \\
	& F_1^m = \sum_{k=1}^{N_T}\sum_{K\in\T} \m(K) (M^k_K-M^{k-1}_K)\phi_K^{k-1},
	\nonumber \\
  & F_2^m = -d_2 \sum_{k=1}^{N_T} \Delta t_m \sum_{K\in\T} 
	\sum_{\sigma \in \E_{{\rm int},K}} \tau_\sigma\DD_{K,\sigma} F(M^k)\phi^{k-1}_K,
	\nonumber \\
  & F_3^m = -\sum_{k=1}^{N_T} \Delta t_m \sum_{K \in \T} \m(K)h(S_K^k,M_K^k)
	\phi_K^{k-1}. \nonumber
\end{align}
We claim that $F_{j0}^m-F_j^m\to 0$ as $m\to\infty$ for $j=1,2,3$. Then
\eqref{6.weak} implies that $F_{10}^m+F_{20}^m+F_{30}^m\to 0$ for $m\to\infty$,
finishing the proof.

For the first limit, we argue as in \cite[Theorem 5.2]{CLP03}:
\begin{align*}
  F_{10}^m &= -\sum_{k=1}^{N_T}\sum_{K\in\T}\m(K)M_{m,K}^k(\phi_K^k-\phi_K^{k-1})
	- \sum_{K\in\T}\m(K)M_{m,K}^0\phi_K^0 \\
	&= -\sum_{k=1}^{N_T}\sum_{K\in\T}\int_{t_{k-1}}^{t_k}\int_K M_{m,K}^k
	\pa_t\phi(x_K,t)dxdt - \sum_{K\in\T}\int_K M_{m,K}^0\phi(x_K,0)dx.
\end{align*}
This shows that $|F_{10}^m-F_1^m|\le C\|\phi\|_{C^2(\overline{\Omega_T})}\eta_m\to 0$
as $m\to\infty$.

Next, we use discrete integration by parts to rewrite
$F_2^m$:
$$
  F_2^m = d_2\sum_{k=1}^{N_T}\Delta t_m\sum_{K\in\T}\sum_{\sigma\in\E_{{\rm int},K}}
	\tau_\sigma\DD_{K,\sigma}F(M^k)\DD_{K,\sigma}\phi^{k-1}.
$$
By the definition of the discrete gradient, we can also rewrite $F_{20}^m$:
$$
  	F_{20}^m = d_2 \sum_{k=1}^{N_T} \sum_{K \in \T}  \sum_{\sigma \in \E_{int,K}} 
		\DD_{K,\sigma} F(M^k)  \frac{\m(\sigma)}{\m(T_{K,\sigma})} 
		\int_{t_{k-1}}^{t_k} \int_{T_{K,\sigma}} \na\phi\cdot\nu_{K,\sigma}dxdt.
$$
Hence, using \cite[Theorem 5.1]{CLP03} and the Cauchy--Schwarz inequality,
we find that
\begin{align*}
  |F_{20}^m-F_2^m| &\le d_2\sum_{k=1}^{N_T} \sum_{K\in\T} 
	\sum_{\sigma\in\E_{{\rm int},K}} \m(\sigma)\DD_\sigma F(M^k) \\ 
	&\phantom{xx}{}\times\bigg|\int_{t_{k-1}}^{t_k}\bigg(\frac{1}{\m(T_{K,\sigma})} 
	\int_{T_{K,\sigma}} \na\phi\cdot \nu_{K,\sigma} dx 
	- \frac{1}{\dist_\sigma}\DD_{K,\sigma} \phi^{k-1}dx \bigg)dt \bigg| \\
	&\le d_2\sum_{k=1}^{N_T} \sum_{K\in\T} \sum_{\sigma\in\E_{{\rm int},K}}  
	\m(\sigma)\DD_\sigma F(M^k)\times C\Delta t_m\eta_m \\
	&\le C\eta_m d_2 \bigg(\sum_{k=1}^{N_T}\Delta t_m \sum_{\sigma\in\E}\m(\sigma) 
	\dist_\sigma \bigg)^{1/2} \bigg( \sum_{k=1}^{N_T}\Delta t_m|F(M^k)|_{1,2,\M}^2 
	\bigg)^{1/2}\\
	&\le C\eta_m d_2\xi^{-1/2} \bigg(\sum_{k=1}^{N_T} \Delta t_m \sum_{\sigma \in \E} 
	\m(\sigma) \dist(x_K,\sigma)\bigg)^{1/2},
\end{align*}
where we used the mesh regularity \eqref{2.regmesh} in the last step.
Taking into account the estimate for $F(M_m)$ from Lemma \ref{lem.grad} and the
property \eqref{2.estmesh}, we infer that $F_{20}^m-F_2^m\to 0$.

Finally, using the regularity of $\phi$, we obtain
\begin{align*}
  |F_{30}^m-F_3^m| &\le \sum_{k=1}^{N_T}\sum_{K\in\T}\m(K)|h(S_K^k,M_K^k)|
	\bigg|\int_{t_{k-1}}^{t_k}\bigg(\phi_K^{k-1}-\frac{1}{\m(K)}\int_K\phi dx\bigg)dt
	\bigg| \\
	&\le \bigg(\kappa_2+\frac{\kappa_3}{\kappa_4}\bigg)\sum_{k=1}^{N_T}\sum_{K\in\T}\m(K)
	\bigg|\int_{t_{k-1}}^{t_k}\bigg(\phi^{k-1}_K-\frac{1}{\m(K)}\int_K\phi dx\bigg)dt
	\bigg| \\
	&\le \bigg(\kappa_2+\frac{\kappa_3}{\kappa_4}\bigg)\m(\Omega)T
	\|\na\phi\|_{L^\infty(\Omega_T)}\eta_m\to 0.
\end{align*}
This finishes the proof.


\section{Numerical experiments}\label{sec.num}

We present in this section some numerical experiments for the biofilm model \eqref{sch.ic}--\eqref{Flux} in one and two space dimensions.

\subsection{Implementation of the scheme}

The finite-volume scheme \eqref{sch.ic}--\eqref{Flux} is implemented in MATLAB. Since the numerical scheme is implicit in time, we have to solve a nonlinear system of equations at each time step. In the one-dimensional case, we use Newton's method. Starting from $(S^{k-1}, M^{k-1})$, we apply a Newton method with precision $\eps = 10^{-10}$ to approximate the solution to the scheme at time step $k$. In the two-dimensional case, we use a Newton method complemented by an adaptive time-stepping strategy to approximate the solution of the scheme at time $t_k$. More precisely, starting again from $(S^{k-1}, M^{k-1})$, we launch a 
Newton method. If the method does not converge with precision 
$\eps= 10^{-8}$ after at most $50$ steps, we multiply the time step by a factor $0.2$ and restart the Newton method. At the beginning of each time step, we increase the value of the previous time step size by multiplying it by $1.1$. Moreover, we impose the condition $10^{-8}\leq \Delta t_{k} \leq 10^{-2}$ with an initial time step size equal to $10^{-5}$. Our adaptive time-step strategy aims to improve the numerical performance of our scheme in terms of number of time steps, CPU time, etc. However, this strategy is not mandatory and, as in our one-dimensional test case, we can always implement our scheme with a constant time step with a reasonable size.

\subsection{Test case 1: Rate of convergence in space}\label{ssec.convergence}

We illustrate the order of convergence in space for the biofilm model in one space dimension with $\Omega = (0,1)$. To this purpose, we choose the coefficients $d_1=4.1667$, $d_2=4.2$, $\kappa_1 = 793.65$, $\kappa_2 = 0.067$, $\kappa_3 = 1$, $\kappa_4 = 0.4$ and $M^D=0$. These values are close to those used in \cite{ESE17}. We take $a=2$ and $b=1$ such that, after elementary computations,
$$
  F(M) = \log(1-x) + \frac{1}{1-x} - 1.
$$
Finally, we impose the initial data
$S^0(x) = 1 - 0.2 \, \sin(\pi x)$ and
\begin{align*}
  & M^0(x) = 0.2 \, g(x-0.38) + 0.9 \, g(x-0.62),\\
  & \mbox{where }g(x)=\max\{1 - 9^2 x^2, 0\}. 
\end{align*}
Since exact solutions to the biofilm model are not explicitly known, we compute a reference solution $(S_{\rm ref}, M_{\rm ref})$ on a uniform mesh composed of $20,480$ cells and with $\Delta t = (1/20,480)^2$. We use this rather small value of $\Delta t$ because the Euler discretization in time exhibits a first-order convergence rate, while we expect a second-order convergence rate in space for scheme \eqref{sch.ic}-\eqref{Flux}, due to to two-point flux approximation scheme used in this work. We compute approximate solutions on uniform meshes made of $80$, $160$, $320$, $640$, $1280$ and $2560$ cells, respectively. In Figure \ref{Fig.conv}, we present the $L^1(\Omega)$ norm of the difference between the approximate solutions and the average of the reference solution $(S_{\rm ref}, M_{\rm ref})$ at the final time $T=10^{-3}$. As expected, we observe a second-order convergence rate in space.

\begin{figure}
\begin{center}
\includegraphics[scale=1]{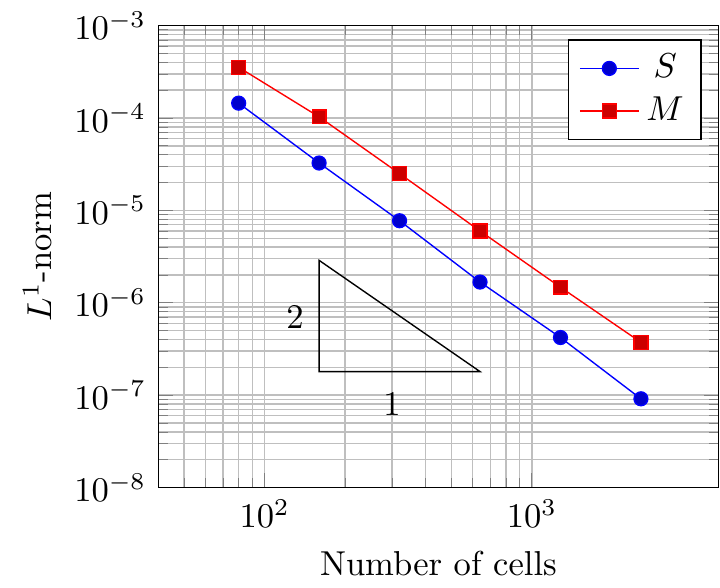}
\end{center}
\caption{Test case 1: $L^1$ norm of the error between the reference solution and the solutions computed on coarser grids at final time $T=10^{-3}$.}
\label{Fig.conv}
\end{figure}

\subsection{Test case 2: Microbial floc}
\label{ssec.2dsimu}

We investigate the behavior of $S$ and $M$ in two space dimensions with domain $\Omega = (0,1) \times (0,1)$ and final time $T=2$. As in the first test case, we choose the coefficients $d_1=4.1667$, $d_2=4.2$, $\kappa_1 = 793.65$, $\kappa_2 = 0.067$, $\kappa_3 = 1$, $\kappa_4 = 0.4$ and $M^D=0$. Here, we take $a=b=4$ such that
$$
  F(M) = -\frac{18 x^2 - 30 x + 13}{3 (x-1)^3} + x + 4 \log(1-x) - \frac{13}{3}
$$
and the initial data $S^0(x,y) = 1$ and 
\begin{align*}
  & M^0(x,y) = 0.3 \, p(x-0.4,y-0.5) + 0.9 \, p(x-0.6,y-0.5),\\
	& \mbox{where }p(x,y)=\max\{1 - 8^2 x^2 - 8^2 y^2, 0\}. 
\end{align*}
The initial data models a microbial floc, i.e.\ a biofilm without substratum.
This situation plays an important role in wastewater treatment.

In Figure \ref{Fig.plotMScase2}, we illustrate the behavior of $S$ and $M$ along time for a mesh of $\Omega = (0,1)^2$ composed of 3584 triangles. We observe, as in \cite{EPL01,EZE09,ESE17}, that after a transient time, the two colonies merge. After this stage, we observe an expansion of the region $\{M > 0 \}$ due to the porous-medium type degeneracy for the equation of $M$, which implies a finite speed of propagation of the interface between $\{M > 0 \}$ and $\{M=0 \}$.
With the chosen parameters, the production rate of the biofilm is positive 
if and only if $S>\kappa^*:=\kappa_2\kappa_4/(\kappa_3-\kappa_2)\approx 0.029$, 
and the biomass fraction is increasing in $\{S>\kappa^*\}$, which is 
confirmed by the numerical experiments.

\begin{figure}
\begin{center}
\includegraphics[scale=0.395]{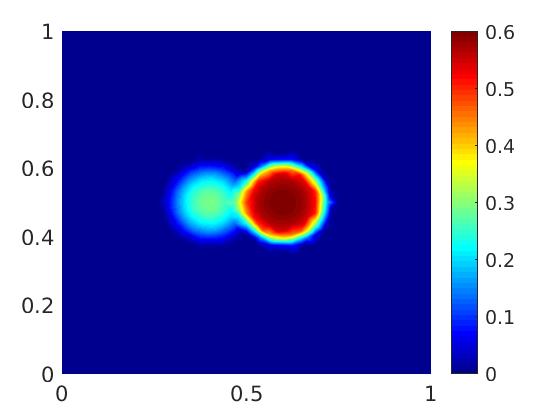}
\includegraphics[scale=0.35]{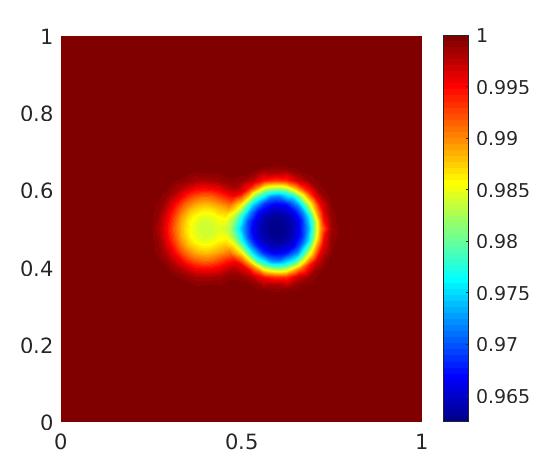}
\includegraphics[scale=0.395]{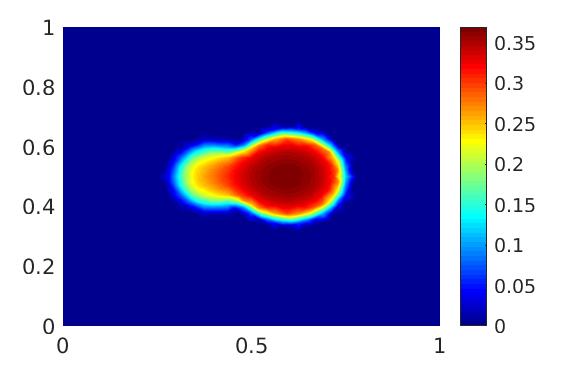}
\includegraphics[scale=0.35]{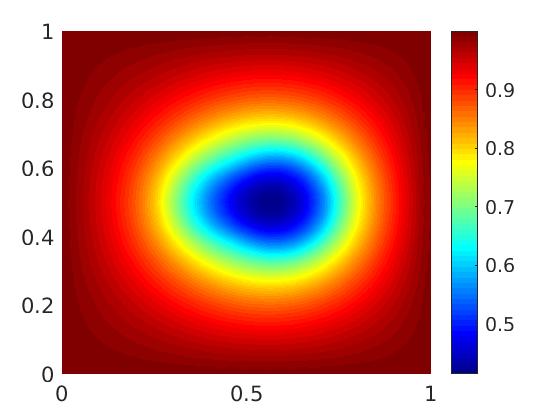}
\includegraphics[scale=0.395]{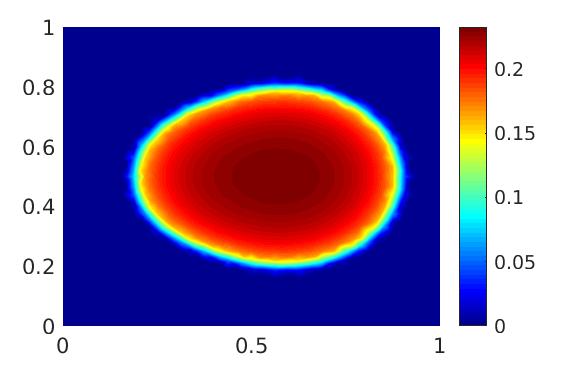}
\includegraphics[scale=0.35]{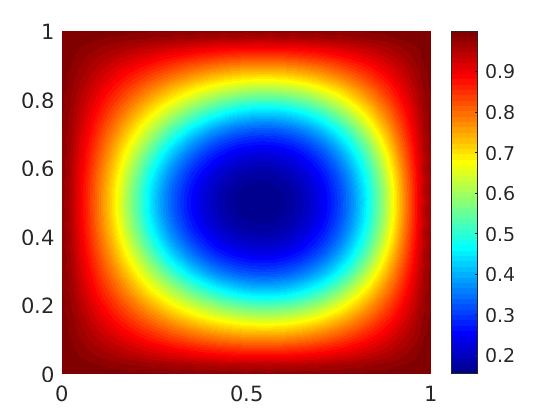}
\end{center}
\caption{Test case 2: Evolution of $M$ (left column) and $S$ (right column) for $t=10^{-4}$ (top row), $t=10^{-2}$ (middle row) and $t=2$ (bottom row).}
\label{Fig.plotMScase2}
\end{figure}


\end{document}